\numberwithin{equation}{section} 
\renewcommand{\thefootnote}{\fnsymbol{footnote}}
\newcommand{\R}{\mathbb{R}}
\newcommand{\N}{\mathbb{N}}
\newcommand{\Z}{\mathbb{Z}}
\newcommand{\T}{\mathbb T}
\newcommand{\ccinf}{C^{\infty}}
\newcommand{\loc}{\mathrm{loc}}
\renewcommand{\cal}[1]{\mathcal C^{#1,\alpha}}
\newcommand{\cbe}[1]{\mathcal C^{#1,\beta}}
\newcommand{\I}{[0,\tau]}
\newcommand{\linf}{L^{\infty}}
\newcommand{\parder}[2]{\frac{\partial #1}{\partial #2}}
\newcommand{\pardder}[2]{\frac{\partial^2 #1}{\partial #2^2}}
\newcommand{\pd}[3]{\frac{\partial^2 #3}{\partial#1\partial#2}}
\newcommand{\pdx}[1]{\frac{\partial #1}{\partial x_1}}
\newcommand{\pdy}[1]{\frac{\partial #1}{\partial x_2}}
\newcommand{\pdz}[1]{\frac{\partial #1}{\partial x_3}}
\newcommand{\pdxx}[1]{\pardder{#1}{x_1}}
\newcommand{\pdyy}[1]{\pardder{#1}{x_2}}
\newcommand{\pdxy}[1]{\pd{x_1}{x_2}{#1}}
\newcommand{\pdxz}[1]{\pd{x_1}{x_3}{#1}}
\newcommand{\pdyz}[1]{\pd{x_2}{x_3}{#1}}
\newcommand{\pdX}[1]{\frac{\partial #1}{\partial X_1}}
\newcommand{\pdY}[1]{\frac{\partial #1}{\partial X_2}}
\newcommand{\pdZ}[1]{\frac{\partial #1}{\partial X_3}}
\newcommand{\pdXX}[1]{\pardder{#1}{X_1}}
\newcommand{\pdYY}[1]{\pardder{#1}{X_2}}
\newcommand{\pdZZ}[1]{\pardder{#1}{X_3}}
\newcommand{\pdXY}[1]{\pd{X_1}{X_2}{#1}}
\newcommand{\dz}{\partial_{x_3}}
\newcommand{\dt}{\partial_t}
\newcommand{\dd}{\partial}
\newcommand{\dxx}{\partial_{x_1 x_1}}
\newcommand{\dyy}{\partial_{x_2 x_2}}
\newcommand{\dxy}{\partial_{x_1 x_2}}
\newcommand{\dZ}{\partial_{X_3}}
\newcommand{\dZZ}{\partial_{X_3 X_3}}
\DeclareMathOperator{\supp}{supp}                  
\DeclareMathOperator{\Tr}{Tr}                      
\renewcommand{\bar}[1]{\overline{#1}}
\DeclareMathOperator{\id}{id}
\let\div=\undefined\DeclareMathOperator*{\div}{div}
\DeclareMathOperator{\Lip}{Lip}
\DeclareMathOperator{\diam}{diam}
\newcommand{\Top}{\mathcal T}
\newcommand{\al}{\alpha}
\newcommand{\be}{\beta}
\newcommand{\Dl}{\Delta}
\newcommand{\g}{\gamma}
\newcommand{\G}{\Gamma}
\newcommand{\ld}{\lambda}
\newcommand{\s}{\sigma}
\newcommand{\ta}{\theta}
\newcommand{\Ta}{\Theta}
\renewcommand{\o}{\omega}
\renewcommand{\O}{\Omega}
\newcommand{\wto}{\rightharpoonup}                 
\definecolor{gr}{rgb}   {0.,   0.69,   0.23 }
\definecolor{bl}{rgb}   {0.,   0.5,   1. }
\definecolor{mg}{rgb}   {0.85,  0.,    0.85}
\definecolor{gy}{rgb}   {0.8,  0.8,   0.8}
\definecolor{yl}{rgb}   {0.8,  0.7,   0.}
\definecolor{or}{rgb}  {0.7,0.2,0.2}
\newcommand{\mres}{%
  \,\raisebox{-.127ex}{\reflectbox{\rotatebox[origin=br]{-90}{$\lnot$}}}\,}    
\newcommand{\noi}{\noindent}
\newcommand\blfootnote[1]{
  \begingroup
  \renewcommand\thefootnote{}\footnote{#1}%
  \addtocounter{footnote}{-1}%
  \endgroup
 }
\theoremstyle{definition} \newtheorem{de}{Definition}[section]
\theoremstyle{remark}     
\theoremstyle{plain}      \newtheorem{thm}[de]{Theorem}
		          \newtheorem*{thm*}{Theorem}
\theoremstyle{plain}      \newtheorem{cor}[de]{Corollary}
\theoremstyle{plain}      \newtheorem{pr}[de]{Proposition}
\theoremstyle{plain}      \newtheorem{lem}[de]{Lemma}
\theoremstyle{remark}     \newtheorem{rmk}[de]{Remark}
\theoremstyle{remark}     
		          \newtheorem*{rmk*}{Remark}
\theoremstyle{plain}
\author{Stefania Lisai and Mark Wilkinson}
\title {Smooth Solutions of the Surface Semi-Geostrophic Equations}
\date{}
\begin{document}

\maketitle


\begin{abstract}
\noindent The semi-geostrophic equations have attracted the attention of the physical and 
mathematical communities since the work of Hoskins in the 1970s owing to their ability 
to model the formation of fronts in rotation-dominated flows, and also to their connection with 
optimal transport theory.
In this paper, we study an active scalar equation, whose activity is determined by way of a Neumann-to-Dirichlet map associated to a fully nonlinear second-order Neumann boundary value problem
on the infinite strip $\R^2\times(0,1)$,
that models a semi-geostrophic flow in regime of constant potential vorticity. This system is an expression of an Eulerian semi-geostrophic flow in a co-ordinate system originally due to Hoskins, to which we shall refer as {\em Hoskins' coordinates}. We obtain results on the local-in-time existence and uniqueness of classical solutions of this active scalar equation in H\"older spaces.
\end{abstract}


\blfootnote{\textit{2010 Mathematics Subject Classification}. 35Q35, 76B03.\\
\textit{Key words and phrases}. active scalar equation; 
        surface semi-geostrophic equations; 
        Dirichlet-to-Neumann map; 
        Schauder theory; 
        Monge-Amp\`ere operator.\\
{\textsc{Maxwell Institute for Mathematical Sciences,
                    Department of Mathematics,
                    Heriot-Watt University,
                    Edinburgh,
                    UK
                    EH14 4AS. }
  \textit{Emails}: \href{mailto:s.lisai@sms.ed.ac.uk}{s.lisai@sms.ed.ac.uk}, 
                \href{mailto:mark.wilkinson@hw.ac.uk}{mark.wilkinson@hw.ac.uk}}}

\renewcommand{\thefootnote}{\arabic{footnote}}

\section{Introduction}

The semi-geostrophic equations (or SG for brevity) constitute a model for the large-scale dynamics of
atmospheres and oceans which are dominated by rotational effects. The equations 
take the form of an active semilinear transport equation in an unknown conservative vector 
field, and can be considered as a formal vanishing Rossby number limit of the well-known
primitive equations; see Section \ref{subsec:SG_coord} for the form of the semi-geostrophic equations in Eulerian coordinates. SG has attracted considerable
attention from the mathematical community over the past 20 years as their analysis
can be tackled using tools from optimal transport theory and the regularity theory 
of Alexandrov solutions of the Monge-Amp\`{e}re equation. In this paper, following
Hoskins \cite{Hoskins75}, we restrict our attention to incompressible semi-geostrophic
flows on an infinite strip $\O:=\R^{2}\times (0, 1)$ in the regime of constant potential vorticity. 
These are modelled by the following active scalar equation (to which we refer as SSG) {\em on the boundary} 
$\partial\Omega$ of the strip $\Omega$ in the unknown buoyancy anomaly $\theta$, namely
\begin{equation}\label{eq:SSG}
\left\{
\begin{array}{l}
\partial_{t}\theta+(w\cdot\nabla)\theta=0, \vspace{2mm}\\
w=\nabla^{\perp}\Top[\theta],
\end{array}
\right.
\end{equation}
where $\nabla^{\perp}$ is the $\pi/2$-rotated gradient operator on $\R^{2}$, 
and $\Top$ is the Neumann-to-Dirichlet map associated to the following 
time-independent second order fully nonlinear Neumann boundary value problem
given by
\begin{equation}\label{eq:BVP}
\left\{
\begin{array}{l}
F(D^{2}\Phi)=0 \quad \text{on}\hspace{2mm}\Omega, \vspace{2mm}\\
\partial_{n}\Phi=\theta \quad \text{on}\hspace{2mm}\partial\Omega,
\end{array}
\right.
\end{equation}
and $F:\mathbb{R}^{3\times 3}\rightarrow\mathbb{R}$ is defined 
pointwise by $F(A):=A_{11}+A_{22}+A_{33}-A_{11}A_{22}+A_{12}A_{21}$ 
for all $A\in\mathbb{R}^{3\times 3}$.
The reader will notice that the system \eqref{eq:SSG} is formally equivalent to the surface quasi-geostrophic model (SQG in short) except that the operator $\mathcal T$ is given by $(-\Delta)^{-1/2}$: see \cite{CMT94:SQG} for details.

In this paper, we construct local-in-time smooth solutions of system 
\ref{eq:SSG} by way of a double fixed point argument in spaces of H\"older continuous functions.
The analysis of system \eqref{eq:BVP} is tackled, in the regime of 
small boundary data, by means of classical 
elliptic theory.
Although our main existence result holds only for {\em local-in-time} smooth solutions of
\ref{eq:SSG}, as opposed to \emph{global-in-time} smooth solutions thereof, 
it is natural
to expect that the dynamics of the system produces discontinuous solutions
in finite time.
Indeed, in the original widely-cited work of Hoskins and Bretherton \cite{HB72}, 
the authors provide evidence of finite-time singularity generation of the
semi-geostrophic equations through a numerical study
on the infinite strip $\O$.

\subsection{Semi-geostrophic dynamics expressed in various coordinate systems} \label{subsec:SG_coord}

In this and the following section, we provide a brief overview of the 
state-of-the-art in results on the semi-geostrophic equations.
Indeed, the semi-geostrophic equations in 
Eulerian coordinates, derived in \cite{Eliassen48}, in the
regime of an incompressible and inviscid flow, comprises the 
following system
\begin{equation}\label{eq:SG1}
	\left\{
	\begin{array}{l}
	\dt u^g + (u\cdot \nabla) u^g = -Ju^a,\vspace{2mm}\\
	\dt \ta + (u\cdot\nabla) \ta =0,\vspace{2mm}\\
	\div u = 0,
	\end{array}
	\right.
\end{equation}
where $u^g:=(u^g_1, u^g_2, 0 )$ is the so-called 
\emph{geostrophic velocity} field which is that part of the Eulerian
velocity field $u$ in perfect geostrophic balance, and $u^a$ is 
the associated \emph{ageostrophic velocity} field, given by
\begin{equation*}
	u^a := u - u^g.
\end{equation*}
Moreover, the matrix $J\in\mathbb{R}^{3\times 3}$ is given by
\begin{equation*}
	J = 
	\begin{pmatrix}
        0 & -1 & 0\\
        1 & 0 & 0\\
        0 & 0 & 0
	\end{pmatrix}.
\end{equation*}
The geostrophic velocity $u^g$ and buoyancy anomaly $\theta$ are
not independent quantities, but rather are realised as the 
gradient of a scalar pressure $\phi$, namely
\begin{equation}\notag
	\nabla \phi = \begin{pmatrix}
                u^g_2\vspace{2mm} \\ -u^g_1\vspace{2mm} 
                \\ {\ta} 
	           \end{pmatrix}.
\end{equation}
We refer to \eqref{eq:SG1} simply as SG in all that follows.

The construction of any notion of solution (either classical or 
distributional) of SG expressed in Eulerian coordinates is a difficult problem,
and only few results in this direction exist in the literature.
By drawing a brief analogy with the theory of water waves, or the free-surface Euler equations 
(see \cite{lannes:waterwaves}),
in the study of SG it is useful to rewrite the governing equations in 
different coordinate systems,
with the hope of constructing some solution of the formally 
equivalent system therein. 
Let us now  present for the convenience of the reader those versions
of SG that have been studied to date
and discuss their relationships briefly.
Moreover, we provide a mathematical derivation of \eqref{eq:SSG} and \eqref{eq:BVP}, 
following Hoskins in \cite{Hoskins75}, in Section \ref{sec:deriv_SSG}.

By introducing the scalar field $P$ defined on $\Omega\times \R$ as
\begin{equation}\notag
	P (x,t) := \phi(x,t) + \frac 12 (x_1^2 + x_2^2),
\end{equation}
often called the \emph{generalised pressure} or \emph{generalised geopotential},
system \eqref{eq:SG1} is equivalent to a semilinear transport equation in the unknown conservative vector fields $\nabla P$ and $u$,
\begin{equation}\label{eq:SG2}
	\left\{
	\begin{array}{l}
		\dt\nabla P + D^2 P u = J (\nabla P- \id_{\O}),\vspace{2mm}\\
		\div u = 0.
	\end{array}
	\right.
\end{equation}
Here, $\id_{\O}$ denotes the identity map on $\O$, $\id_{\O}:x\mapsto x$.
Notably, the system \eqref{eq:SG2} is not supplemented with an evolution equation for the velocity
vector field $u$; rather, the velocity field $u$ must evolve in such a way that the time-dependent vector field $\nabla P$ remain conservative. One formally equivalent formulation of SG considered in the literature is that in so-called 
\emph{Lagrangian coordinates}, namely 
\begin{equation}\label{eq:SG3}
	\left\{
	\begin{array}{l}
		\dt T = J(T-X),\vspace{2mm}\\
		\dt X(x,t) = u(X(x,t),t),\vspace{2mm}\\
		\div u =0,
	\end{array}
	\right.
\end{equation}
where $X$ is the Lagrangian flow corresponding to $u$, and the relation
between the unknown $T$ and $P$ is given by
\begin{equation}\label{eq:SG_Lagflow}
T(x,t):=\nabla P(X(x,t),t).
\end{equation}
Evidently, the equivalence between the Lagrangian formulation and the previous Eulerian one depends on the regularity of the velocity vector field $u$. One other formally equivalent formulation is expressed in so-called \emph{geostrophic coordinates}, as studied for instance in the well-known work of Benamou and Brenier \cite{BB98}. Indeed, if we assume that the map $\nabla P(\cdot,t):\O\to \R^3$ 
is invertible for any $t>0$, then the scalar field $\al$ defined as 
\begin{equation}\label{eq:SG_MA}
\al(\cdot,t):=\det D^2 P^* (\cdot,t)
\end{equation}
satisfies a transport equation with a vector field that depends on $\nabla P^*$, namely
\begin{equation}\label{eq:SG4}
	\left\{
	\begin{array}{l}
		\dt \al + (W\cdot\nabla)\al = 0,\vspace{2mm}\\
		W(\cdot,t):= J(\id_{\R^3}-\nabla P^*(\cdot,t)).
	\end{array}
	\right.
\end{equation}
This system is also an example of an active scalar equation, where the activity is determined by way of a weak solution of the second boundary-value problem for the Monge-Amp\`{e}re equation. As is standard, we use the notation $P^*$ to denote the Legendre transform of $P$,
while $\id_{\R^3}:x\mapsto x$ denotes the identity map on $\R^3$. We now discuss the results in the literature regarding the well-posedness 
of the previous formulations of the system \eqref{eq:SG1}.

\subsection{Brief Review of Existence Results for SG}

In \cite{BB98}, Benamou and Brenier provided the first result on existence of weak solutions to SG in geostrophic co-ordinates \eqref{eq:SG4}
in full geostrophic coordinates. The main theorem proved in \cite{BB98} is the following:
\begin{thm*}[\cite{BB98}]
	Let $\O\subset \R^3$ be a bounded Lipschitz open set and 
	$\al_0\in L^p(\R^3)$ be of compact support.
	For any $\tau>0$ and $p>3$, there exist
	\begin{itemize}
		\item [(i)] $R>0$ with $B:=B(0,R)\subset\R^3$;
		\item [(ii)] $\al \in\linf (0,\tau;L^p(B))$;
		\item [(iii)] $P \in \linf (0,\tau;W^{1,\infty}(\O))$ with $P(\cdot, t)$ convex;
		\item [(iv)] $P^{\ast} \in \linf (0,\tau;W^{1,\infty}(\R^3) )$; and
		\item [(v)] $W \in \linf (0,\tau;\linf_{\loc}(\R^3)\cap BV_{\loc}(\R^3))$
	\end{itemize}
such that $(\alpha, W)$ furnishes a distributional solution of \eqref{eq:SG_MA} and \eqref{eq:SG4}, with $\al(\cdot,0)=\al_0$ in $L^p(\R^3)$.
\end{thm*}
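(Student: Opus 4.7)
The plan is to construct the solution $(\al,W)$ as the limit of a time-discretized scheme that alternates between solving an optimal transport problem (the Monge-Amp\`ere equation \eqref{eq:SG_MA}) and advecting $\al$ by the resulting velocity field (the transport equation \eqref{eq:SG4}). First I would regularise $\al_0$ by a smooth compactly supported approximation $\al_0^\eps$, normalised so that its mass equals $|\Omega|$, and fix a time step $h>0$, defining iteratively the approximation $\al_h(\cdot,nh)$. At each step, given $\al_h(\cdot,nh)\in L^p(\mathbb{R}^3)$ of compact support, Brenier's theorem furnishes a convex potential $P_n:\Omega\to\mathbb{R}$ whose gradient pushes $\mathbf{1}_{\Omega}\,dx$ forward onto $\al_h(\cdot,nh)\,dx$; its Legendre transform $P_n^{*}$ is convex on $\mathbb{R}^3$ with $\nabla P_n^{*}(\mathbb{R}^3)\subset \overline{\Omega}$ and satisfies $\det D^{2}P_n^{*}=\al_h(\cdot,nh)$ in the Alexandrov sense. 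One then sets $W_n:=J(\mathrm{id}_{\mathbb{R}^3}-\nabla P_n^{*})$ and propagates $\al_h$ on $[nh,(n+1)h)$ along the flow of this frozen velocity.

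The uniform estimates needed for compactness come directly from the structure of the scheme. Because $\nabla P_n^{*}$ takes values in $\overline{\Omega}$, the field $W_n$ is bounded by $|x|+C$ on every ball, so its flow keeps the support of $\al_h(\cdot,t)$ inside a common ball $B:=B(0,R)$ for all $t\in[0,\tau]$, provided $R$ is chosen large enough relative to $\tau$ and $\mathrm{supp}\,\al_0^{\eps}$. A direct computation gives $\mathrm{div}\,W_n=-\partial_{1}\partial_{2}P_n^{*}+\partial_{2}\partial_{1}P_n^{*}=0$, so the flow is volume-preserving and the $L^{p}$ norm of $\al_h$ is conserved, yielding the bound in $L^{\infty}(0,\tau;L^{p}(B))$. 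Convexity of $P_n^{*}$ gives $D^{2}P_n^{*}\geq 0$ as a matrix of nonnegative Radon measures, and combined with the Lipschitz control from $\nabla P_n^{*}(\mathbb{R}^3)\subset\overline{\Omega}$ this produces uniform $BV_{\mathrm{loc}}$ bounds on $\nabla P_n^{*}$, hence on $W_n$. The bounds on $P_n\in W^{1,\infty}(\Omega)$ and $P_n^{*}\in W^{1,\infty}(\mathbb{R}^3)$ follow from convex duality and the boundedness of $\Omega$ and $B$.

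In passing to the limit $h\to 0$, the uniform bounds yield weak-$*$ convergent subsequences in the respective spaces, while the $BV_{\mathrm{loc}}$ estimate upgrades the convergence of $\nabla P_h^{*}$ to strong convergence in $L^{1}_{\mathrm{loc}}$; the stability of Brenier maps under weak convergence of their source and target measures then identifies the limiting pair $(P,P^{*})$ with the optimal potentials associated to the limit $\al$, so that $\det D^{2}P^{*}=\al$ persists in the Alexandrov sense. The transport equation passes to the limit because the limit $W$ is locally bounded, divergence-free and locally $BV$, placing the problem within the DiPerna-Lions-Ambrosio renormalisation framework and ruling out concentration effects. Removing the regularisation $\eps\to 0$ is then handled by the same stability arguments.

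The principal obstacle is precisely this nonlinear coupling: one has only weak compactness of $\al_h$ in $L^p$, but the velocity $W_h$ depends on $\al_h$ through the highly nonlinear Monge-Amp\`ere map. The proof therefore hinges on the rigidity of optimal transport --- namely, on the strong $L^{1}_{\mathrm{loc}}$ convergence of Brenier maps under mere weak convergence of their endpoint measures --- to conclude that the limiting triple $(\al,P^{*},W)$ genuinely satisfies \eqref{eq:SG_MA}--\eqref{eq:SG4} in the distributional sense. A secondary but delicate point is securing enough regularity of $W$ for the transport equation to be well-posed at the PDE level, for which the divergence-free structure and the $BV_{\mathrm{loc}}$ estimate inherited from convexity of $P^{*}$ are the decisive inputs.
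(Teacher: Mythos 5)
The paper offers no proof of this statement: it is quoted from \cite{BB98}, and the only commentary on the method is a single sentence noting that Benamou and Brenier perform a time-stepping argument and solve the Monge--Amp\`ere equation at each step using \cite{Brenier91}. Your outline reconstructs that scheme faithfully---discretise in time, invoke Brenier's polar factorisation to produce $P_n$, $P_n^*$ and the Alexandrov Monge--Amp\`ere equation, observe $\div W_n=0$ and $\nabla P_n^*(\R^3)\subset\bar{\O}$ to obtain uniform $L^p$, support, $W^{1,\infty}$ and $BV_{\loc}$ bounds, then pass to the limit using the stability of Brenier maps to upgrade weak convergence of $\alpha_h$ to strong $L^1_{\loc}$ convergence of $\nabla P_h^*$, which is precisely what closes the nonlinear product $\alpha_h W_h$ in the distributional equation. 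The one substantive inaccuracy is your appeal to the DiPerna--Lions--Ambrosio renormalisation framework for $BV$ vector fields to pass to the limit in the transport equation: Ambrosio's $BV$ extension postdates \cite{BB98} by six years and is not used there. It is also not needed for the step you invoke it for---once $\nabla P_h^*\to\nabla P^*$ strongly in $L^1_{\loc}$ and $\alpha_h\rightharpoonup^*\alpha$, the product $\alpha_h W_h$ converges in $\mathcal{D}'$ directly. Where $BV$ (or, as Benamou--Brenier actually do, smoothing) matters is in making sense of the frozen-velocity propagation at each discrete step, since $W_n$ is generally not Lipschitz; your sketch elides that point.
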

The sense in which the quantities above solve the Cauchy problem is 
defined in Section 5 of \cite{BB98}.
The authors perform a time-stepping argument, and 
solve the Monge-Amp\`ere equation at each time step using the fundamental
results of \cite{Brenier91}. Their result is compatible with 
the independently-derived stability principle of Cullen and Shutts in \cite{CS87}:
in this work, the authors affirm that the solutions of SG that are \emph{stable} (in the sense of inner variation) are those 
for which the generalised pressure $P$ is convex. This result is the first clear connection
between the semi-geostrophic equations and the theory of optimal transport.
In \cite{CG01}, Cullen and Gangbo extend the result by Benamou and Brenier to 
the shallow water regime, which requires constant potential temperature $\ta$
on a free-surface. In \cite{CM03}, Cullen and Maroofi further extend the results from 
\cite{BB98} and \cite{CG01} to the case of fully compressible semi-geostrophic flow. These results deal with SG in full geostrophic 
coordinates, as the regularity of the obtained solutions is not sufficient to construct solutions in physical coordinates, either Eulerian or
Lagrangian. 

In \cite{CF06}, Cullen and Feldman make use of the 
theory by Ambrosio \cite{Ambrosio04:ODE} on the transport equation and ODEs
with vector fields of bounded variation, to prove existence of weak solutions to SG in Lagrangian 
coordinates \eqref{eq:SG3}, both in domains in $\mathbb{R}^{3}$ and in the regime of shallow water.
The main result by Cullen and Feldman is the following.
\begin{thm*}[\cite{CF06}]
	Let $\O\subset\R^3$ be an open bounded subset. 
	Let $P_0$ be a bounded convex function on $\Omega$ and assume that 
	$\nabla P_0 \# \mathcal L^3\mres \O \ll\mathcal L^3$ with density in 
	$L^q(\R^3)$, for some $1<q<\infty$. Then for any $\tau>0$ there exist
	\begin{itemize}
        \item [(i)] $r\in[1,\infty)$;
		\item [(ii)] $P\in\linf\left(0, \tau; W^{1,\infty}(\O)\right)\cap C([0,T);W^{1,r}(\O))$
            with $P(t,\cdot)$ convex for any $t\in[0,T)$; and
        \item [(iii)] $X: [0, \tau)\rightarrow L^r(\O;\R^3)$ Borel map
	\end{itemize}
	such that $(P,X)$ is a weak Lagrangian solution of SG in 
	Lagrangian coordinates \eqref{eq:SG3} and \eqref{eq:SG_Lagflow}
	on $\O\times [0, T)$.
\end{thm*}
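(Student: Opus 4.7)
The plan is to reduce the construction of weak Lagrangian solutions to two established pillars: the Benamou-Brenier existence theory for SG in geostrophic (dual) coordinates, and Ambrosio's well-posedness of the continuity equation and associated regular Lagrangian flow when the driving velocity field is of bounded variation.

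I would first translate the initial datum into dual coordinates by setting $\al_0 := \nabla P_0 \# (\mathcal L^3 \mres \O)$, which by hypothesis is absolutely continuous with $L^q(\R^3)$-density and is compactly supported since $\O$ is bounded and $P_0$ is bounded and convex. Applying the Benamou-Brenier theorem to $\al_0$ produces $\al \in \linf(0, \tau; L^q(\R^3))$ of compact support, a convex $P^\ast \in \linf(0, \tau; W^{1,\infty}(\R^3))$, and a velocity $W = J(\id_{\R^3} - \nabla P^\ast) \in \linf(0, \tau; \linf_\loc(\R^3) \cap BV_\loc(\R^3))$ that together satisfy the Monge-Amp\`ere equation \eqref{eq:SG_MA} and the transport equation \eqref{eq:SG4} distributionally. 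The generalised pressure in physical coordinates is then recovered by Legendre duality, $P(\cdot, t) := (P^\ast(\cdot, t))^\ast$, which is convex on $\O$ with $\nabla P(\cdot, t) \# (\mathcal L^3 \mres \O) = \al(\cdot, t)\mathcal L^3$; the uniform $W^{1,\infty}$ bound on $P$ over $\O$ then follows from the compactness of the support of $\al$.

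Next, I would apply Ambrosio's regular Lagrangian flow theory to the $BV_\loc$ field $W$, whose distributional divergence $-\Delta P^\ast$ is a signed Radon measure bounded above. This yields a Borel map $\Phi: [0, \tau) \times \R^3 \to \R^3$ such that $t \mapsto \Phi(t, y)$ is absolutely continuous and satisfies $\partial_t\Phi(t, y) = W(t, \Phi(t, y))$ for $\al_0 \mathcal L^3$-a.e. $y$, together with the pushforward identity $\Phi(t, \cdot) \# (\al_0 \mathcal L^3) = \al(t, \cdot)\mathcal L^3$. The Lagrangian flow in physical coordinates is then defined by
\begin{equation*}
    X(x, t) := \nabla P^\ast\bigl(t, \Phi(t, \nabla P_0(x))\bigr), \qquad T(x, t) := \Phi(t, \nabla P_0(x)),
\end{equation*}
so that \eqref{eq:SG_Lagflow} holds as $T(x, t) = \nabla P(X(x, t), t)$. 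One then checks that $X(\cdot, 0) = \id_\O$ almost everywhere, via the identity $\nabla P^\ast_0 \circ \nabla P_0 = \id_\O$ a.e., and that $\partial_t T = J(T - X)$ is a consequence of the flow equation for $\Phi$. The exponent $r$ may be chosen arbitrarily in $[1, \infty)$ since the uniform $L^\infty$ bound on $\nabla P^\ast$ gives $X(\cdot, t) \in L^\infty(\O; \R^3)$, and the $C([0, \tau); W^{1, r}(\O))$ regularity of $P$ follows from the stability of Legendre transforms under weak-$\ast$ convergence of $\al(\cdot, t)$ in $L^q$.

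The main obstacle is making sense of the composition $\nabla P^\ast \circ \Phi$ in the definition of $X$: since $\nabla P^\ast(t, \cdot)$ is defined only $\mathcal L^3$-almost everywhere as the gradient of a convex function, classical pointwise composition is unavailable. The resolution exploits the absolute continuity $\Phi(t, \cdot) \# (\al_0 \mathcal L^3) \ll \mathcal L^3$ to define the composition $\al_0 \mathcal L^3$-a.e., and then to lift this to an a.e. definition on $\O$ through $\nabla P_0$. A secondary difficulty is verifying that the resulting $X$ is a genuine integral curve in the weak Lagrangian sense; this rests on approximating $W$ by smooth velocity fields, constructing classical flows for the approximants, and passing to the limit using the strong $L^1$-stability of regular Lagrangian flows proved by Ambrosio, together with the time continuity of $\al$ inherited from the Benamou-Brenier construction.
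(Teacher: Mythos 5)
Note first that the paper does not prove this theorem: it is stated purely as a citation of Cullen and Feldman \cite{CF06}, with the reader referred to their Definition~2.5 for the notion of weak Lagrangian solution. There is therefore no ``paper's own proof'' to compare against, and what you have produced is a reconstruction of the argument in \cite{CF06}. As such a reconstruction, it is substantially on target: the two pillars you identify — Benamou--Brenier existence in dual (geostrophic) coordinates, followed by Ambrosio's regular Lagrangian flow for $BV_{\loc}$ velocity fields — are indeed the backbone of the Cullen--Feldman proof, and the conjugation
\begin{equation*}
X(\cdot,t) \;=\; \nabla P^{\ast}(\cdot,t)\circ\Phi(t,\cdot)\circ\nabla P_{0}
\end{equation*}
is the correct definition of the physical Lagrangian map. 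You also correctly single out the composition ambiguity (composing an a.e.-defined gradient with a flow that only pushes forward a measure absolutely continuous w.r.t.\ Lebesgue) as the technical heart of the matter.

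There is, however, one concrete error worth fixing. You assert that the distributional divergence of $W = J(\id_{\R^{3}} - \nabla P^{\ast})$ is $-\Delta P^{\ast}$, a signed Radon measure bounded above. In fact $\div W = 0$ identically as a distribution: the identity part contributes $\partial_{i}(J_{ij}x_{j}) = \mathrm{tr}\,J = 0$, and the gradient part contributes $\partial_{i}(J_{ij}\partial_{j}P^{\ast}) = J_{ij}\partial_{i}\partial_{j}P^{\ast} = 0$ because $J$ is skew-symmetric while the (distributional) Hessian $D^{2}P^{\ast}$ is symmetric. This is not just cosmetic: the hypothesis of Ambrosio's flow theorem concerns a one-sided bound on $\div W$, and here it is satisfied trivially with equality, which is also precisely why the dual-space flow is measure-preserving and why the transport equation \eqref{eq:SG4} can equivalently be read as a continuity equation. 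You should replace the claim about $-\Delta P^{\ast}$ with the observation that $\div W = 0$, and state that this is what puts $W$ within scope of Ambrosio's theory.
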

We invite the reader to consult definition 2.5 in \cite{CF06} for   
the sense in which $(P,X)$ is a 
weak solution of \eqref{eq:SG3} and \eqref{eq:SG_Lagflow}.
It is also worth mentioning the work by Faria, Lopes Filho and Nussenzveig-Lopes 
\cite{Faria09} in which the authors extend the result in \cite{CF06} to the borderline Lebesgue index case $q=1$, i.e. the case in which $\nabla P_0\# \mathcal L^3\mres \O$ has density in $L^1(\R^3)$.
It is still not clear if one might use these solutions 
to construct a solution in Eulerian coordinates. Uniqueness of these weak solutions also remains an open question.

The problem of uniqueness of solutions in any context was addressed by Loeper in \cite{Loeper06}, in which the 
author proves existence and stability (in the sense of Shutts and Cullen)
of measure-valued solutions to SG in full geostrophic coordinates \eqref{eq:SG4}
on the torus $\T^3$. In this work, he also studies regularity and uniqueness of smooth solutions, and 
explores analytical similarities between SG \eqref{eq:SG4} and the well-known
2-D incompressible Euler equations in vorticity formulation, namely
\begin{equation*}
	\left\{
	\begin{array}{l}
		\dt \o + \nabla \cdot (\o v)=0,\vspace{1mm}\\
		v = \nabla^{\perp}\Phi,\vspace{1mm}\\
		\Dl \Phi = \o.
	\end{array}
    \right.
\end{equation*}
The smooth solutions constructed on $\T^3$ by Loeper admit the property that $\nabla P(\cdot,t)$ is a diffeomorphism, whence 
smooth solutions in full geostrophic coordinates of \eqref{eq:SG4} 
can be used to construct classical solutions in Eulerian coordinates of 
\eqref{eq:SG2}. This is the first result on existence of Eulerian solutions,
however the condition of having 
$\supp (\nabla P(\cdot,t)\#\mathcal L^3 \mres \O) = \R^3$ implies that
the vector field $\nabla P(\cdot,t)$ cannot lie in $\linf(\O)$ for any time $t$. 
This poses an issue if one is interested in the physical application of the model SG, 
as such solutions correspond to an unbounded potential temperature, which is given by $\dz P(\cdot,t)=\ta(\cdot,t)$.

Following the result of De Philippis and Figalli \cite{DF12:reg_MA} on higher regularity of Alexandrov solutions
to the Monge-Amp\`ere equation, it became possible 
to improve the regularity of weak solutions $\nabla P(\cdot,t)$ constructed by Benamou and Brenier to the class 
$W^{1,1}$. Indeed, in \cite{ACDF12:convex} and \cite{ACDF12:periodic}, Ambrosio, Colombo,
De Philippis and Figalli prove the existence of global weak solutions of
SG \eqref{eq:SG2} in Eulerian coordinates in the case of a convex 3-D domain $\O$
in Eulerian coordinates
and on the 2-D torus $\T^2$ respectively,
in the case in which $\nabla P_t\#\mathcal L^3\mres \O$ is assumed bounded above and 
away from zero. 

In \cite{FT13}, Feldman and Tudorascu demonstrate that weak solutions
in Eulerian coordinates have the property that the measure $\nabla P(\cdot,t)\#\mathcal L^3\mres\O$
has no atomic part. In order to allow such a case, which is physically pertinent as it corresponds to particular frontal singularities, the authors define a notion of generalised weak solution of SG in Lagrangian coordinates and prove the existence thereof. The authors improve upon this result
in \cite{FT15}, in which they prove existence of \emph{relaxed} Lagrangian
solutions to SG on a domain in $\mathbb{R}^{3}$
with any general initial data $P_0 \in H^1(\O)$ which is convex.
Finally, in \cite{FT17}, Feldman and Tudorascu address the problem of uniqueness of solutions of SG. In particular, they demonstrate weak-strong uniqueness in Lagrangian coordinates under the assumptions of boundedness of the Eulerian velocity field $u$ and uniform convexity of $P^*$.
In \cite{Wilkinson18:SG}, the second author proves existence of local-in-time classical solutions of \eqref{eq:SG1} in Eulerian coordinates on 3-dimensional smooth bounded simply-connected domains. The technique used relies on the theory of the so-called div-curl systems, and it is consistent with the Stability Principle of Cullen and Shutts.

 
\subsection{Main Result} 

In this paper, we prove the existence and uniqueness 
of classical solutions of the initial value problem associated to \eqref{eq:SSG} for given smooth initial
data. As a minor simplification of the full model originally introduced in Hoskins \cite{Hoskins75}, we restrict our interest to the case in which $\ta=0$ on 
the lower boundary $\R^2\times\{0\}\subset\partial\Omega$, so that one need only deal with a single evolution equation 
\eqref{eq:SSG} on the upper boundary $\R^2\times\{1\}$, as opposed to \emph{two} coupled equations on the disconnected set $\partial\Omega$. We comment further on this point in the derivation of the surface semi-geostrophic model from the full semi-geostrophic equations in Section \ref{sec:deriv_SSG} below. We also restrict ourselves to considering SSG on the 
flat torus $\T^2$, instead of the unbounded plane $\R^2$.

In all that follows, we say that a pair of maps $(\ta,w)$ is a 
\emph{local-in-time classical solution} of \eqref{eq:SSG} associated to initial datum $\ta_0\in C^1(\T^2)$
if there exists $\tau>0$ such that $\ta\in C^1((0,\tau);C^1(\T^2))$,
$w\in C^0([0,\tau];C^1_{\s}(\T^2;\R^2))$, with $\ta$ and $w$ solving \eqref{eq:SSG} pointwise
everywhere in $\T^2\times (0,\tau)$, together with $\ta(\cdot,0)=\ta_0$. The main result of this paper is the following theorem. We use here the notation $\mathcal C^1_{\s}(\T^2;\R^2)$ to indicate the space of divergence-free functions in $\mathcal C^1(\T^2;\R^2)$.

\begin{thm}[Local-in-time existence and uniqueness of classical solutions to SSG]
\label{thm:SSG}
Suppose $k\in\N$. There exists $\rho_{k}>0$ such that given $\ta_0\in\cal {k+2}(\T^2)$ with $\|\ta_0\|_{\cal {k+2}}\leq \rho_{k}$
 and $\int_{\T^2}\ta_0=0$,
 there exists a $\tau_{k}=\tau_{k}(\ta_0)>0$ such that there is an associated 
 local-in-time classical solution $(\ta,w)$ of \eqref{eq:SSG} on $\T^2\times (0,\tau_k)$.
Moreover, for $k\geq 2$ the classical solution is unique.
\end{thm}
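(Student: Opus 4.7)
The plan is to establish Theorem \ref{thm:SSG} via a \emph{nested} fixed-point argument: an inner iteration constructs the Neumann-to-Dirichlet map $\Top$ for small boundary data by solving the fully nonlinear Neumann boundary value problem \eqref{eq:BVP}, and an outer iteration solves the transport equation in \eqref{eq:SSG} for the buoyancy anomaly. The key structural observation is that
\begin{equation*}
F(A) = \Tr(A) + Q(A), \qquad Q(A) := -A_{11}A_{22}+A_{12}A_{21},
\end{equation*}
so that \eqref{eq:BVP} reads $\Dl\Phi = -Q(D^{2}\Phi)$ with Neumann data $\theta$, and the non-linearity is purely quadratic in the Hessian. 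Hence, for $\|\theta\|_{\cal{k+2}}$ small, one may treat $-Q(D^{2}\Phi)$ as a quadratic perturbation of zero in Schauder spaces around $\Phi\equiv 0$.

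To construct $\Top$ I would iterate from $\Phi^{(0)}\equiv 0$, letting $\Phi^{(n+1)}$ be the unique zero-mean solution of the linear Neumann problem $\Dl\Phi^{(n+1)}=-Q(D^{2}\Phi^{(n)})$ on the slab $\T^{2}\times(0,1)$ with $\dd_{n}\Phi^{(n+1)}=\theta$ on the upper face and $\dd_{n}\Phi^{(n+1)}=0$ on the lower one; the compatibility condition is met thanks to the zero-mean hypothesis on $\theta$. Schauder estimates up to the boundary, together with the quadratic bound $\|Q(D^{2}\Phi^{(n)})\|_{\cal{k+1}} \lesssim \|\Phi^{(n)}\|_{\cal{k+3}}^{2}$, show that once $\|\theta\|_{\cal{k+2}}\leq \rho_{k}$ is small enough the sequence is Cauchy in a ball of $\cal{k+3}(\bar\Omega)$ of radius $O(\|\theta\|_{\cal{k+2}})$. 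The limit $\Phi_{\theta}$ defines $\Top[\theta]:=\Phi_{\theta}|_{\T^{2}\times\{1\}}$, and the analogous argument applied to differences of data yields the Lipschitz estimate $\|\Top[\theta_{1}]-\Top[\theta_{2}]\|_{\cal{k+3}} \lesssim \|\theta_{1}-\theta_{2}\|_{\cal{k+2}}$. Crucially, $\Top$ \emph{gains one spatial derivative}, so that $w=\nabla^{\perp}\Top[\theta] \in \cal{k+2}_{\s}(\T^{2};\R^{2})$ whenever $\theta\in\cal{k+2}(\T^{2})$, and the regularity scale is closed under the loop $\theta\mapsto w$.

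For the outer iteration I would work in
\begin{equation*}
X_{\tau} := \bigl\{\tilde\theta\in C^{0}([0,\tau];\cal{k+2}(\T^{2})) : \tilde\theta(\cdot,0)=\theta_{0},\ \|\tilde\theta(\cdot,t)\|_{\cal{k+2}}\leq 2\rho_{k}\ \forall\,t\in[0,\tau]\bigr\},
\end{equation*}
and, given $\tilde\theta\in X_{\tau}$, solve the \emph{linear} transport equation $\dt\theta + \nabla^{\perp}\Top[\tilde\theta]\cdot\nabla\theta = 0$, $\theta(\cdot,0)=\theta_{0}$, by the method of characteristics (the velocity being divergence-free and at least $C^{1,\alpha}$ in space). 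A standard H\"older estimate yields
\begin{equation*}
\|\theta(\cdot,t)\|_{\cal{k+2}}\leq \|\theta_{0}\|_{\cal{k+2}}\exp\Bigl(C\int_{0}^{t}\|\tilde\theta(\cdot,s)\|_{\cal{k+2}}\,ds\Bigr),
\end{equation*}
so for $\tau_{k}=\tau_{k}(\theta_{0})$ sufficiently small the map $\tilde\theta\mapsto\theta$ sends $X_{\tau}$ to itself; a parallel estimate on differences in the weaker norm $C^{0}([0,\tau];\cal{k+1}(\T^{2}))$, in which the one-derivative loss in the transport term is compensated by the one-derivative gain of $\Top$, provides contractivity. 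Banach's theorem then furnishes the desired local-in-time classical solution.

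Uniqueness for $k\geq 2$ follows from a Gr\"onwall argument applied to the equation
\begin{equation*}
\dt(\theta_{1}-\theta_{2}) + \nabla^{\perp}\Top[\theta_{1}]\cdot\nabla(\theta_{1}-\theta_{2}) = -\nabla^{\perp}\bigl(\Top[\theta_{1}]-\Top[\theta_{2}]\bigr)\cdot\nabla\theta_{2}
\end{equation*}
satisfied by the difference of two classical solutions sharing the same initial datum, the restriction $k\geq 2$ ensuring that $\nabla\theta_{2}$ carries enough H\"older regularity to be paired against the source via the Lipschitz bound on $\Top$. I expect the principal obstacle to lie in the inner step: deriving Schauder estimates up to the flat boundary $\T^{2}\times\{0,1\}$ for the fully nonlinear operator $F$, quantitative enough to establish the one-derivative smoothing property of $\Top$. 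Once that elliptic ingredient is in place the transport step is routine, and the one-derivative gain here --- as opposed to the half-derivative gain of the Riesz-type operator in SQG --- is precisely what allows the outer loop to close in the H\"older scale.
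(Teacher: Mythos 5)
Your proposal is correct in outline and matches the paper on the crucial elliptic step: the paper also constructs the solution operator $\mathcal S$ for the fully nonlinear Neumann problem by Banach iteration, treating the Monge--Amp\`ere nonlinearity as a quadratic perturbation controlled via Schauder estimates on the flat strip $\T^2\times(0,1)$ (Theorems \ref{thm:nonlin_BVP} and \ref{thm:WP_BVP_Poisson}), and then defines $\mathcal T=\mathcal R\circ\mathcal S$ with the one-derivative gain $\cal{k+1}(\T^2)\to\cal{k+2}(\T^2)$ that you identify as the engine of the whole argument. You also correctly flag the flat-boundary Schauder estimate as the principal technical obstacle --- the paper handles this by proving an $L^\infty$ bound via Fourier synthesis of the harmonic extension and quoting Agmon--Douglis--Nirenberg for the seminorm control, then proving higher regularity by the reflection/interior-regularity route.

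Where you depart from the paper is the outer step. The paper iterates on the \emph{velocity} field in a space $\mathcal D\subset C^{0,\beta}([0,\tau];\cal{1}_{\s}(\T^2;\R^2))$, equipped with a strictly weaker norm $C^{0,\beta'}_tC^{1,\alpha'}_x$, and invokes \emph{Schauder's} fixed point theorem, with compactness coming from the H\"older gap and an appendix of transport estimates (Proposition \ref{app:pr:estimate_ta}). You instead iterate on the \emph{scalar} $\theta$ in a closed ball of $C^0_t\cal{k+2}_x$ and invoke \emph{Banach's} theorem, obtaining contraction in the weaker norm $C^0_t\cal{k+1}_x$ by balancing the one-derivative loss in the transport term against the one-derivative gain of $\mathcal T$. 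Both are legitimate; the Banach route is tighter in that it gives uniqueness of the fixed point for free within the ball, at the cost of requiring the slightly delicate closedness-of-the-ball-in-the-weak-norm step, while Schauder is more forgiving. Note, however, that fixed-point uniqueness does not by itself close the uniqueness claim in Theorem \ref{thm:SSG}: one must still show that \emph{any} classical solution stays in the ball, which is why the paper (and your final paragraph) supplies an independent Gr\"onwall argument on the difference of two solutions. Your closing estimate is structurally identical to the paper's part II --- the difference equation you write down is the one the paper works with by way of characteristics --- so the uniqueness step is consistent. The one imprecision is the explanation of the $k\geq2$ threshold: the obstruction in the paper is not merely that $\nabla\theta_2$ needs H\"older regularity, but that the characteristic-based Gr\"onwall estimate on $[\theta-\psi]_{1,\alpha}$ consumes H\"older seminorms of $D^2\theta_0$ and of $D^2 w$, forcing $\theta_0\in\cal4(\T^2)$ and $w(\cdot,t)\in\cal3(\T^2;\R^2)$; so $k\geq2$ is really a bookkeeping requirement of that particular estimate rather than of the abstract pairing you describe.
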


As we have presented above, the active vector field $w$ depends on the unknown $\ta$ through the Neumann-to-Dirichlet operator associated to the Neumann BVP for the fully-nonlinear equation \eqref{eq:BVP}. Therefore, an important part of our proof consists in proving that such an operator is well-defined and admits useful analytical properties.
\begin{rmk}
Whilst we believe it is possible to establish the analogue of Theorem \ref{thm:SSG} in the case that $\theta_{0}$ is smooth and non-periodic on $\mathbb{R}^{2}$, we do not do this here.
\end{rmk}

\subsection{Structure of Paper}
The paper is structured as follows.
In Section \ref{sec:deriv_SSG}, we present the derivation of SSG from SG, as originally performed in 
\cite{Hoskins75}. 
In Section \ref{sec:NtD}, we introduce a construction of the 
Neumann-to-Dirichlet operator $\Top$ defined on H\"older spaces, and discuss some of its relevant properties.
In Section \ref{sec:SSG}, we prove Theorem \ref{thm:SSG} through an application of Schauder's
fixed point theorem, making use of classical estimates on the solutions of passive transport equations on $\mathbb{T}^{2}$.
In the closing Section \ref{sec:concl}, we discuss some natural generalisations of our result. Finally, in the Appendix \ref{sec:app}, for the reader's convenience, we provide details of the calculations underlying the arguments
in the previous sections.


\section{Derivation of SSG from SG}\label{sec:deriv_SSG}

The surface semi-geostrophic equations were derived 
from the semi-geostrophic equations by Hoskins \cite{Hoskins75} in 1975. They arise when one considers the special case of solutions of SG which admit spatially-homogeneous {\em potential vorticity}. For the convenience of the reader, we reproduce a derivation of this model here, starting from classical solutions of SG in Eulerian coordinates (which are assumed, but are not known, to exist). It will be helpful in the sequel to consider the system \eqref{eq:SG1} expressed in all its components, namely
\begin{equation}\label{eq:SG_Eul}
\left\{
    \begin{array}{l}
		{\displaystyle (\dt + u\cdot \nabla) u^g_1 - f u_2 + \pdx {\phi} = 0},  \vspace{2mm}\\
		{\displaystyle (\dt + u\cdot \nabla) u^g_2 + f u_1 + \pdy {\phi} = 0} , \vspace{2mm}\\
		(\dt + u\cdot\nabla)\ta = 0,  \vspace{2mm}\\
		{\displaystyle \frac{g}{\ta_0} \ta = \pdz {\phi}},  \vspace{2mm}\\

		{\displaystyle u^g_1 = -\frac 1f \pdy {\phi}},  \vspace{2mm}\\
		{\displaystyle u^g_2 = \frac 1f \pdx {\phi}}, \vspace{2mm} \\
		\div u = 0,
	\end{array}
\right.
\end{equation}
where $f>0$ is the Coriolis parameter, assumed constant in what follows. Moreover, the Eulerian velocity field $u$ is subject to the no-flux constraint $u(\cdot, t)\cdot n=0$ on $\partial\O$ for all times $t$, where $n:\partial\Omega\rightarrow\mathbb{S}^{2}$ denotes the outward unit normal map. As we shall see below, this no-flux assumption is crucial in the derivation of the surface semi-geostrophic equations. We define the {\em vorticity} $\zeta_{g}$ associated to the dynamics of system \eqref{eq:SG_Eul} by
\begin{align}\notag
	\zeta_g &:= \left( -\pdz {u^g_2},\ \pdz {u^g_1},\ f + \pdx {u^g_2} - \pdy {u^g_1} \right)  
                + \frac 1f \left( \parder{(u^g_1, u^g_2)}{(x_1,x_2)},\ 
                                \parder{(u^g_1, u^g_2)}{(x_1,x_3)},\ 
                                \parder{(u^g_1, u^g_2)}{(x_2,x_3)} \right), \label{eq:def_vorticity}
\end{align}
and one may readily check that it satisfies the following {\em vorticity equation}, namely
\begin{equation*}
	(\dt + u\cdot\nabla)\zeta_g = (\zeta_g\cdot \nabla)u - \frac g{\ta_0} e_3\wedge\nabla\ta
\end{equation*}
pointwise in the classical sense on space-time. If one in turn defines the \emph{potential vorticity} $q_{g}$ of the geostrophic flow as 
\begin{equation}\label{eq:def_PV}
	q_g := \zeta_g\cdot\nabla\ta,
\end{equation}
it follows from a calculation that the potential vorticity is conserved along Lagrangian particle trajectories, namely
\begin{equation}\notag
	(\dt + u\cdot \nabla)q_g = 0.
\end{equation}
As such, if one furnishes the system \eqref{eq:SG_Eul} with initial data $(u^{g}_{1, 0}, u^{g}_{2, 0}, \theta_{0})$ whose associated potential vorticity is spatially inhomogenous, then the corresponding solution $(u^{g}_{1}, u^{g}_{2}, \theta, u)$ also formally has this property. It was discovered by Hoskins that solutions which admit constant potential vorticity on $\Omega$ admit a rather beautiful structure in another coordinate system (to which we henceforth refer as {\em Hoskins' coordinates}), details of which we now provide.

\subsection{Transformation of Coordinates}
Suppose smooth initial data $(u^{g}_{1, 0}, u^{g}_{2, 0}, \theta_{0})$ for the dynamics formally generated by \eqref{eq:SG_Eul} are given. We now consider the associated 1-parameter family $\{H_t\}_{t\geq 0}$ of smooth maps
$H_t:\O\to\O$ defined by
\begin{equation}\notag
	H_t(x):=\begin{pmatrix}
	        	x_1 + \frac 1f u^g_2(x,t) \vspace{1mm} \\
	        	x_2 - \frac 1f u^g_1(x,t) \vspace{1mm} \\
	        	x_3
	        \end{pmatrix} \quad \text{for}\hspace{2mm}x\in\Omega.
\end{equation}
For each time $t$, the map $H_t$ is assumed to be 
a $C^1$-diffeomorphism in what follows. In the sequel, we shall use capital Roman letters, namely $X=(X_1,X_2,X_3)$, to denote the independent variable for maps defined on $\Omega$ considered as the range space of the coordinate transformation $H_{t}$ for any $t$. Our aim in the sequel is to close a system of equations for a number of `natural' quantities in the coordinate system determined by $\{H_{t}\}_{t\geq 0}$. In this pursuit, we begin with the following proposition.
\begin{pr}\label{manip}
For any smooth map $\Psi:\Omega\times \mathbb{R}\rightarrow\mathbb{R}$, it holds that
\begin{equation*}
        (\dt + u(x,t)\cdot\nabla_x)\Psi(H_t(x),t)= 
            \left(\dt + U(H_t(x),t)\cdot\nabla_X\right)\Psi (H_t(x),t),
\end{equation*}
where $U(X,t):= (u^g_1(H_t^{-1}(X),t), u^g_2(H_t^{-1}(X),t), u_3(H_t^{-1}(X),t))$ for $X\in\Omega$ and each time $t$. 
\end{pr}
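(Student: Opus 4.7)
The plan is to reduce the identity to a purely algebraic equality between two vector fields, which in turn will be shown to follow directly from the momentum equations of SG. More precisely, fix $x\in\O$ and $t\geq 0$ and set $X:=H_t(x)$. I would begin by differentiating $\Psi(H_t(x),t)$ with respect to $t$ and with respect to $x$ via the chain rule, obtaining
\begin{equation*}
(\dt+u(x,t)\cdot\nabla_x)\Psi(H_t(x),t) = (\dt\Psi)(X,t) + \bigl[\dt H_t(x) + DH_t(x)\,u(x,t)\bigr]\cdot \nabla_X\Psi(X,t).
\end{equation*}
Comparing with the desired right-hand side, the proposition is therefore equivalent to the pointwise vector identity
\begin{equation*}
\dt H_t(x) + DH_t(x)\,u(x,t) \,=\, U(H_t(x),t).
\end{equation*}

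The second step is to compute both sides of this identity explicitly using the definition of $H_t$. A direct computation gives $\dt H_t(x) = \tfrac{1}{f}(\dt u^g_2, -\dt u^g_1, 0)(x,t)$, and writing $DH_t = I + \tfrac{1}{f}M$ where $M$ collects the spatial derivatives of $(u^g_2,-u^g_1,0)$, the spatial term produces $DH_t(x)u(x,t) = u(x,t) + \tfrac{1}{f}\bigl((u\cdot\nabla)u^g_2,\,-(u\cdot\nabla)u^g_1,\,0\bigr)(x,t)$. Summing, the required identity collapses to
\begin{equation*}
u(x,t) + \tfrac{1}{f}\bigl((\dt+u\cdot\nabla)u^g_2,\,-(\dt+u\cdot\nabla)u^g_1,\,0\bigr)(x,t) = (u^g_1,u^g_2,u_3)(x,t).
\end{equation*}

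The third and decisive step is to invoke the first two equations of \eqref{eq:SG_Eul} together with the geostrophic balance relations $\partial_{x_1}\phi = fu^g_2$, $\partial_{x_2}\phi=-fu^g_1$. Substituting these into the momentum equations yields $(\dt+u\cdot\nabla)u^g_1 = f(u_2-u^g_2)$ and $(\dt+u\cdot\nabla)u^g_2 = f(u^g_1-u_1)$, so that the bracketed quantity on the left reduces to $(u^g_1-u_1,\,u^g_2-u_2,\,0)$, and the identity follows. Combining the three horizontal components with the trivial third row of $DH_t$ gives the vertical component $u_3$ in $U\circ H_t$, matching the definition of $U$ stated in the proposition.

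I expect the main obstacle to be bookkeeping rather than ideas: one must keep track of which components of $u$ are geostrophic and which are ageostrophic, carefully use the sign conventions embedded in the definition of $H_t$, and confirm that the absence of a horizontal ageostrophic term in the third row of $DH_t$ is exactly what lets the Eulerian vertical velocity $u_3$ survive in $U$. Once the algebraic identity above is in place, the full proposition is obtained by reading the chain-rule expansion backwards.
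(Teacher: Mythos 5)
Your proof is correct and follows essentially the same path as the paper's: a chain-rule expansion that reduces the claim to the vector identity $\dt H_t + (DH_t)u = (u^g_1,u^g_2,u_3)$, which is then verified from the first two momentum equations of \eqref{eq:SG_Eul} combined with the geostrophic balance relations. The paper checks the identity component-by-component as $(\dt+u\cdot\nabla)H_i$ whereas you package it as a single vector equality, but the content is identical.
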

\begin{proof}
We begin by noticing that for any smooth map $\Psi=\Psi(X,t)$, the material derivative of the composition $(x, t)\mapsto \Psi(H_{t}(x), t)$ (with respect to the Eulerian velocity field $u$) in Eulerian coordinates is given by 
\begin{equation*}
        (\dt + u(x,t)\cdot\nabla_x)\Psi(H_t(x),t) = 
            \dt \Psi (H_t(x),t) + (\partial_{t}H(x, t) + (D_{x}H(x, t))^{T}u(x,t))\cdot\nabla_X\Psi (H_t(x),t).
\end{equation*}
As such, the corresponding velocity field that advects the flow in Hoskins' coordinate system is given simply by $\partial_{t}H+(D_{x}H)^{T}u$. If we consider its components, namely
    \begin{align*}
        (\dt + u(x,t)\cdot \nabla_x)H_1 (x,t) 
            &= u_1(x,t) + \frac 1f \left( - fu_1(x,t) - \pdy{\phi}(x,t) \right)\\
            &= -\frac 1f \pdy{\phi}(x,t)
                = u^g_1(x,t),\\
        (\dt + u(x,t)\cdot \nabla_x)H_2 (x,t) 
            &= u_2(x,t) - \frac 1f \left( fu_2(x,t) - \pdx{\phi}(x,t) \right)\\
            &= \frac 1f \pdx{\phi}(x,t)
                = u^g_2(x,t),\\
        (\dt + u(x,t)\cdot \nabla_x)H_3 (x,t)&=u_3(x,t),
    \end{align*}
it follows simply that
\begin{equation}\notag
            \partial_{t}H+(D_{x}H)^{T}u = (u^g_1, u^g_2, u_3),
\end{equation}
whence follows the proof of the claim.
\end{proof}
It will be helpful in what follows to note that the matrices $D_xH_t$ and $D_X H_t^{-1}$ are explicitly given by the following expressions:
\begin{equation}\notag
	D_x H_t(x,t) = \begin{pmatrix}
	              	1+ \frac 1{f^2} {\phi}_{11} & \frac 1{f^2} {\phi}_{12} & 0 \\
	              	\frac 1{f^2} {\phi}_{12} & 1+ \frac 1{f^2} {\phi}_{22} & 0 \\
	              	\frac 1{f^2} {\phi}_{13} & \frac 1{f^2} {\phi}_{23} & 1
	              \end{pmatrix}(x,t),
\end{equation}
and
\begin{align}\notag
	& j(x, t)D_X H_t^{-1} (H_t(x))  \\ \label{eq:DX_Ht_inv}
        &\quad=\begin{pmatrix}
        	1+ \frac 1{f^2} \phi_{22} & - \frac 1{f^2} \phi_{12} & 0 \\
        	- \frac 1{f^2} \phi_{12} & 1+ \frac 1{f^2} \phi_{11} & 0 \\
        	- \frac 1{f^2}\phi_{13} + \frac 1{f^4}\bigg(\phi_{12}\phi_{23} - \phi_{13}\phi_{22}\bigg)
          & - \frac 1{f^2}\phi_{23} + \frac 1{f^4}\bigg(\phi_{12}\phi_{13} - \phi_{11}\phi_{23}\bigg)
          & j
        \end{pmatrix}(x ,t),
\end{align}
where we used the notation $\phi_{ij}=\frac{\partial^2\phi}{\partial x_i\partial x_j}$ and $j$ denotes the Jacobian of $H_t$, namely
\begin{equation*}
	j(x,t) := \det D_xH_t(x) 
	= \left(1 + \frac 1{f^2}\pdxx{\phi}(x,t)\right)
	\left(1+ \frac 1{f^2}\pdyy{\phi}(x,t)\right) 
	- \frac 1{f^4}\left(\pdxy{\phi}(x,t)\right)^2.
\end{equation*}
We shall also employ the notation $J(X,t):=j(H_t^{-1}(X),t)$ in what follows. The following observation holds as a consequence of Proposition \ref{manip} above.
\begin{cor}
Suppose $(u^{g}_{1}, u^{g}_{2}, \theta, u)$ is a smooth solution of \eqref{eq:SG_Eul}. It follows that both the maps $\Theta$ and $Q$ defined pointwise as
\begin{equation*}
\Theta(X, t):=\theta(H_{t}^{-1}(X), t) \quad \text{and}\quad Q(X, t):=q_{g}(H_{t}^{-1}(X), t)
\end{equation*}
are smooth solutions of the transport equation
\begin{equation*}
(\partial_{t}+U\cdot\nabla)\Psi=0.
\end{equation*}
\end{cor}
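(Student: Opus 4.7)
The corollary is an immediate consequence of Proposition \ref{manip} combined with the two conservation laws that hold in Eulerian coordinates: the third equation of \eqref{eq:SG_Eul} gives $(\partial_{t}+u\cdot\nabla_{x})\theta=0$, and the calculation summarised in the discussion of \eqref{eq:def_PV} gives $(\partial_{t}+u\cdot\nabla_{x})q_{g}=0$.

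My plan is as follows. I first observe that, by definition of $\Theta$ and $Q$, we have the pointwise identities
\begin{equation*}
\Theta(H_{t}(x),t)=\theta(x,t)\qquad\text{and}\qquad Q(H_{t}(x),t)=q_{g}(x,t)
\end{equation*}
for all $x\in\Omega$ and $t\geq 0$, since $H_{t}$ is a $C^{1}$-diffeomorphism. Applying Proposition \ref{manip} with $\Psi=\Theta$ yields
\begin{equation*}
(\partial_{t}+u(x,t)\cdot\nabla_{x})\Theta(H_{t}(x),t)=(\partial_{t}+U(H_{t}(x),t)\cdot\nabla_{X})\Theta(H_{t}(x),t).
\end{equation*}
The left-hand side is just $(\partial_{t}+u\cdot\nabla_{x})\theta(x,t)$, which vanishes identically on $\Omega\times\mathbb{R}$ by the transport equation for $\theta$ in \eqref{eq:SG_Eul}. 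Hence
\begin{equation*}
(\partial_{t}+U(H_{t}(x),t)\cdot\nabla_{X})\Theta(H_{t}(x),t)=0\quad\text{for all }x\in\Omega,\,t\geq 0.
\end{equation*}
Since $H_{t}:\Omega\to\Omega$ is a diffeomorphism for every $t$, the variable $X=H_{t}(x)$ ranges over all of $\Omega$, whence $(\partial_{t}+U\cdot\nabla_{X})\Theta=0$ pointwise on $\Omega\times\mathbb{R}$.

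An identical argument applied to $\Psi=Q$, together with the conservation of potential vorticity $(\partial_{t}+u\cdot\nabla_{x})q_{g}=0$ derived from \eqref{eq:def_PV}, yields $(\partial_{t}+U\cdot\nabla_{X})Q=0$ on $\Omega\times\mathbb{R}$. There is no serious obstacle here, the only point requiring slight care is the logical passage from an identity holding at points of the form $H_{t}(x)$ to one holding at every $X\in\Omega$, which is precisely what the surjectivity of $H_{t}$ provides.
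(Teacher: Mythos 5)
Your proof is correct and follows essentially the same route as the paper's: both arguments apply Proposition \ref{manip} together with the Lagrangian invariance of $\theta$ and $q_{g}$ under the Eulerian flow $u$, then transfer to the $U$-material derivative by the diffeomorphism property of $H_{t}$. The paper's printed proof also derives the auxiliary identity $\partial_{X_3}\Theta = Q/(fJ)$ from the vorticity structure of $D_{X}H_{t}^{-1}$, but that step is not needed to establish the stated transport equations and is recorded there only for later use in deriving \eqref{eq:SSGderiv5}.
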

\begin{rmk}
In all that follows, we refer to $\Theta$ and $Q$ as the geostrophic buoyancy anomaly and geostrophic potential vorticity, respectively.
\end{rmk}
\begin{proof}
By the definition of vorticity $\zeta_{g}$ and the form of the matrix $D_X H_t^{-1}$ in \eqref{eq:DX_Ht_inv}, one observes that the third row of $D_X H_t^{-1}$ is given simply by a multiple of the vorticity $\zeta_g$, namely
\begin{equation*}
\frac 1f \zeta_g(x,t) = j(x,t)(D_X H_t^{-1}(H_t(x)))_3.
\end{equation*}
For any smooth map $\psi\equiv\psi(x)$, it follows that
\begin{align*}
\pdZ{} \left(\psi(H_t^{-1}(X))\right) &= \frac 1{f J(X,t)} \zeta_g(H_t^{-1}(X),t)\cdot \nabla_x \psi(H_t^{-1}(X)),
\end{align*}
with, in particular, it being the case that
\begin{align}\notag
	\pdZ{\Ta}(X,t)=\frac 1{fJ(X,t)} Q(X,t),
\end{align}
by the definition of potential vorticity \eqref{eq:def_PV}.
Since the potential temperature $\ta$ and the 
potential vorticity $q_g$ are Lagrangian invariants with respect to the Eulerian flow $u$, it follows that $\Theta$ and $Q$ are Lagrangian invariants with respect to the flow $U$, namely
 \begin{align}\label{eq:conserv_Ta}
	(\dt + U\cdot \nabla_X)\Ta = 0,\vspace{1mm}\\
	\label{eq:conserv_Q}
	(\dt + U\cdot\nabla_X)Q = 0,
\end{align}
which concludes the proof.
\end{proof}
The study of $Q$ and $\Theta$ in the coordinate system determined by $\{H_{t}\}_{t\geq 0}$ will, in some sense, `replace' the study of $u$ and $\nabla P$ in Eulerian coordinates. To see how this is the case, we introduce an important streamfunction originally due to Hoskins.

\subsection{A New Streamfunction in Geostrophic Coordinates}
An important observation of Hoskins was that there exists a time-dependent potential $\Phi$ which `stores' all salient features of the dynamics in geostrophic coordinates. Indeed, following \cite[Section~4]{Hoskins75}, we define a streamfunction $\Phi$ in Hoskins' coordinates $\Phi:\O\times (0, \infty)\rightarrow\mathbb{R}$ by
\begin{equation*}
	\Phi(X,t) : = \phi (H_t^{-1}(X),t)
        + \frac 12 \left( u^g_1(H_t^{-1}(X),t)^2
                        + u^g_2(H_t^{-1}(X),t)^2 \right)
\end{equation*}
for $X\in\Omega$ and any $t>0$. A straightforward calculation reveals that
\begin{align*}
	D_x H_t(x)\nabla_X \Phi(H_t(x),t) 
        &= \nabla_x \phi (x,t)
                + u^g_1(x,t) \nabla_x u^g_1(x,t)
                + u^g_2(x,t) \nabla_x u^g_2(x,t)\\
        &=\nabla_x\phi(x,t) 
                + \frac 1{f^2}\pdx {\phi}(x,t)\nabla_x\left(\pdx{\phi}(x,t)\right)
                + \frac 1{f^2}\pdy {\phi}(x,t)\nabla_x\left(\pdy{\phi}(x,t)\right)\\
        &=\nabla_x\phi(x,t) + \begin{pmatrix}
                              	\pdxx{\phi}(x,t) & \pdxy{\phi}(x,t) & 0\\
                              	\pdxy{\phi}(x,t) & \pdyy{\phi}(x,t) & 0\\
                              	\pdxz{\phi}(x,t) & \pdyz{\phi}(x,t) & 0
                              \end{pmatrix} \nabla_x \phi(x,t)\\
        &= D_x H_t(x) \nabla_x\phi(x,t).
\end{align*}
Therefore, by the assumption that $H_{t}$ be a diffeomorphism of $\Omega$ for all times $t$, we have that the following identity between the gradients of the 
streamfunction $\Phi$ and the pressure $\phi$ holds true:
\begin{equation*}
\nabla_X\Phi(X,t) = \nabla_x\phi(H_t^{-1}(X),t).
\end{equation*}
It is this identity which furnishes the link between the semi-geostrophic equations in Eulerian coordinates and the so-called surface semi-geostrophic equations in Hoskins' coordinates. In order to understand the dynamics of $\Phi$ in geostrophic
coordinates, we observe the following identities which relate the second derivatives of the pressure $\phi$ to those of the streamfunction $\Phi$:
\begin{align*}
	1+ \frac 1{f^2} \pdxx\phi(H_t^{-1}(X),t) 
            &= J(X,t) \pdY{}(H_t^{-1}(X))_2\\
            &= J(X,t) \left(1-\frac 1{f^2}\pdYY{\Phi}(X,t)\right),\\
    1+ \frac 1{f^2} \pdyy\phi(H_t^{-1}(X),t)
            &= J(X,t)\pdX{} (H_t^{-1}(X))_1\\
            &= J(X,t) \left(1 - \frac 1{f^2} \pdXX \Phi (X,t) \right),\\
    \frac 1{f^2} \pdxy \phi(H_t^{-1}(X),t) 
            &= -J(X,t)\pdX(H_t^{-1}(X))_2\\
            &= -J(X,t)\frac 1{f^2} \pdXY\Phi(X,t).
\end{align*}
By taking products and differences in the above, we may identify an equation for the determinant $J$, namely
\begin{align*}
	J  = J ^2 \bigg\{\left(1-\frac 1{f^2}\pdXX{\Phi} \right)
            \left(1 - \frac 1{f^2} \pdYY \Phi   \right) 
            - \frac 1{f^4}\left( \pdXY\Phi \right)^2 \bigg\}.
\end{align*}
One can rewrite the determinant in the 
equation above in terms of the geostrophic potential vorticity $Q$ and the derivative $\dZ{\Ta}=\dZZ\Phi$ to obtain
\begin{equation}\label{eq:SSGderiv5}
	1 = \frac 1{f^2}\left( \pdXX \Phi + \pdYY\Phi\right) 
            + \frac{f\ta_0}{gQ}\pdZZ\Phi
            - \frac 1{f^4}\left(\pdXX\Phi\pdYY\Phi - \bigg(\pdXY\Phi\bigg)^2\right).
\end{equation}
It is the conservation laws \eqref{eq:conserv_Ta} and \eqref{eq:conserv_Q}
and the equation \eqref{eq:SSGderiv5} which comprise Hoskins' formulation of the semi-geostrophic equations \eqref{eq:SG_Eul} in the coordinate system determined by $\{H_{t}\}_{t\geq 0}$, namely
\begin{equation}\label{eq:SG_geo}
\left\{
    \begin{array}{l}
	{\displaystyle (\dt + U\cdot\nabla_X)\Ta = 0}, \vspace{2mm}\\
    {\displaystyle (\dt + U\cdot\nabla_X)Q =0}, \vspace{2mm}\\
    {\displaystyle 1 = \frac 1{f^2}\left( \pdXX \Phi 
        + \pdYY\Phi\right) + \frac{f\ta_0}{gQ}\pdZZ\Phi
        - \frac 1{f^4}\left(\pdXX\Phi\pdYY\Phi 
        - \bigg(\pdXY\Phi\bigg)^2\right)},
    \end{array}
\right.
\end{equation}
where the geostrophic velocity field $U$ is given by 
\begin{equation*}
U=\left(-\frac 1f\pdY\Phi, \frac 1f \pdX\Phi, u_3 \circ H_t^{-1}\right).
\end{equation*} 
From a structural point of view, system \eqref{eq:SG_geo} constitutes a fully nonlinear PDE (possibly of elliptic type) with non-constant coefficients which is coupled to a pair of transport equations associated to the geostrophic velocity field $U$. Moreover, the boundary condition $u_3=0$ on both components of $\dd \O$ is equivalent to $U_3=0$ on $\dd\O$ in Hoskins' geostrophic
coordinates. At this moment, we do not know that this system is {\em closed}, in the sense that the IBVP associated to \eqref{eq:SG_geo} admits a unique solution in any sense. It is at this point it is prudent to employ the additional assumption that initial data $(u^{g}_{1, 0}, u^{g}_{2, 0}, \theta_{0})$ are taken such that the value of $q_{g, 0}$ given by is constant on $\Omega$. This allows us to eliminate the transport equation for the geostrophic potential vorticity $Q$ and ensure that the fully nonlinear equation for $\Phi$ in \eqref{eq:SG_geo} admit constant coefficients. We outline the structure of the corresponding system below.

\subsection{Surface Semi-geostrophic Flow}

As intimated above, the equations \eqref{eq:SG_geo} that we obtain in Hoskins' geostrophic
coordinates are not easily studied. We make the additional important assumption of
constant potential vorticity at time $t=0$, namely
\begin{align*}
	q_{g, 0} = \frac{f\theta_0}g N^2 \qquad \Longleftrightarrow \qquad Q_{0}:=q_{g, 0}\circ H_{0}=\frac{f\theta_0}g N^2,
\end{align*}
where the constant $N$ is the Brunt–V\"ais\"al\"a frequency of the fluid.
Although this assumption might appear to be unnecessarily restrictive at first glance, it is in fact a good approximation when studying real-world atmospheric flows (see \cite{Juckes94:QG}). Mathematically on the other hand, with this simplification and subject to the rescaling
\begin{equation*}
 \widetilde{\Phi}(X_1, X_2, X_3)=\Phi\left(\frac{X_1}f, \frac{X_2}f, \frac{X_3}N\right) - \frac{X_3^2}2,
\end{equation*}
system \eqref{eq:SG_geo} reduces to the following set of coupled equations:
\begin{equation*}
\left\{
    \begin{array}{l}
	{\displaystyle \left(\dt -\frac 1f\pdY\Phi\pdX{} 
            + \frac 1f \pdX\Phi \pdY{} 
            + U_3 \pdZ{} \right)\pdZ \Phi = 0},  \vspace{1mm} \\
    {\displaystyle \Delta \Phi= \pdXX\Phi\pdYY\Phi 
        - \bigg(\pdXY\Phi\bigg)^2,} 
    \end{array}
\right.
\end{equation*}
together with the boundary condition $U_3=0$ on $\dd\O$, where the rescaled value of $\Phi$ has been simply relabelled as $\Phi$. It turns out that in this setting of spatially inhomogeneous potential vorticity, one need only solve the transport equation on each of the boundary planes $\partial\Omega$, as opposed to on the entire interior of the domain $\Omega$. Indeed, the nomenclature `surface semi-geostrophic equations' comes from the fact that the dynamics in geostrophic coordinates in the bulk is `determined' by the dynamics on the upper and lower boundary planes. Of course, in our work, as the dynamics on the lower boundary portion is trivial, it suffices to understand the evolution of the geostrophic buoyancy on the upper boundary plane alone. 


\section{Construction and Properties of the Neumann-to-Dirichlet Operator}\label{sec:NtD}

Our first step in the proof of our main result, namely Theorem \ref{thm:SSG}, is to demonstrate that the Neumann-to-Dirichlet map (whose formal definition is given in \ref{def:berp} below) associated to the fully nonlinear equation is well defined and admits useful analytical properties. Indeed, this pursuit is captured by the following theorem:

\begin{thm}\label{thm:nonlin_BVP}
    There exists a Fr\'echet differentiable map $\mathcal S$ defined on a ball $Y\subset\cal {k+1}(\T^2)$
    such that, for any $\ta\in Y$, $\mathcal S[\ta]$ is a $\cal {k+2}$-classical solution of 
    \begin{equation}\label{eq:BVP_zeromean}
    \left\{
    \begin{array}{l}
    \Dl \Phi =  \dxx\Phi\dyy\Phi - (\dxy\Phi)^2,  \vspace{1mm}\\
    \dz \Phi|_{\Gamma_0} =0,  \vspace{1mm}\\
    \dz \Phi|_{\Gamma_1} = \theta,  \vspace{1mm}\\
    \fint_{\O}\Phi=0,
    \end{array}
    \right.
    \end{equation}
    on the domain $\O:=\T^2\times(0,1)$ with upper boundary $\G_1:=\T^2\times\{1\}$
    and lower boundary $\G_0:=\T^2\times\{0\}$, for any $k\in \N$.
\end{thm}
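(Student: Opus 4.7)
The plan is to obtain $\mathcal S$ by a direct application of the Banach-space Implicit Function Theorem (IFT) at the trivial solution $(\Phi, \theta) = (0, 0)$. I would first introduce Banach spaces adapted to the geometry of the BVP,
\begin{align*}
X &:= \Bigl\{\Phi \in \cal{k+2}(\bar\O) : \fint_\O \Phi = 0,\ \dz \Phi|_{\G_0} = 0\Bigr\}, \\
Y &:= \Bigl\{\theta \in \cal{k+1}(\T^2) : \int_{\T^2} \theta = 0\Bigr\}, \\
Z &:= \cal{k}(\bar\O) \times \cal{k+1}(\T^2),
\end{align*}
and define the nonlinear operator $F : X \times Y \to Z$ by
\begin{equation*}
F(\Phi, \theta) := \bigl(\Dl \Phi - M(\Phi),\ \dz \Phi|_{\G_1} - \theta\bigr), \qquad M(\Phi) := \dxx\Phi\,\dyy\Phi - (\dxy\Phi)^2,
\end{equation*}
so that solutions of \eqref{eq:BVP_zeromean} correspond exactly to zeros of $F$.

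The key algebraic observation I would exploit is that $M$ is a null Lagrangian in the horizontal variables: for each $x_3 \in [0,1]$, two integrations by parts on the flat torus give $\int_{\T^2} M(\Phi)(\cdot, x_3) = 0$. Combined with the divergence theorem, this forces $F$ to land in the closed codimension-one subspace $\tilde Z := \{(f, g) \in Z : \int_\O f = \int_{\T^2} g\}$, so that the zero-mean constraint built into $Y$ is precisely the compatibility condition required by the associated linear Neumann problem. Since $F$ is polynomial (of degree two) in $D^2\Phi$ and affine in $\theta$, it is real-analytic on $X\times Y$, with Fr\'echet derivative at the origin given by $dF(0, 0)[\psi, \vartheta] = (\Dl\psi,\ \dz\psi|_{\G_1} - \vartheta)$.

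The principal analytic step is then to prove that $D_\Phi F(0, 0) : X \to \tilde Z$ is a Banach-space isomorphism, which reduces to unique solvability in $\cal{k+2}$ of the linear Neumann problem $\Dl\psi = f$ in $\O$ with $\dz\psi|_{\G_0} = 0$, $\dz\psi|_{\G_1} = g$, $\fint_\O\psi = 0$ for every $(f, g) \in \tilde Z$. I would first produce a unique weak solution in the zero-mean subspace of $H^1(\O)$ by Lax--Milgram, then bootstrap to $\cal{k+2}$ using classical interior and boundary Schauder estimates; the boundary estimates are available because $\G_0$ and $\G_1$ are flat, and horizontal periodicity poses no obstruction. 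The IFT then yields $\rho_k > 0$ and a Fr\'echet differentiable map $\mathcal S : Y \cap B_{\rho_k} \to X$ satisfying $F(\mathcal S[\theta], \theta) = 0$, so that $\mathcal S[\theta]$ is by construction a classical $\cal{k+2}$ solution of \eqref{eq:BVP_zeromean}. I expect the main obstacle to be the correct identification of the codimension-one target $\tilde Z$ via the null Lagrangian identity for $M$: without this identity the linearised operator would fail to be surjective and the IFT could not be applied without an additional projection step. The remaining ingredients --- differentiability of $F$, Schauder theory for the flat-boundary Neumann problem, and the final invocation of the IFT --- are standard, provided one is careful with the bookkeeping of H\"older indices so that the image of $F$ indeed lies in $\cal{k}(\bar\O)$.
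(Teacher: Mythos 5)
Your proposal is correct, and it takes a genuinely different route from the paper. The paper constructs $\mathcal S$ by a Banach Fixed Point argument: it defines $T^{(\theta)} := \mathcal A_\theta \circ \mathcal M$ and shows it is a contraction on a small ball of $\cal{k+2}(\bar\O)$, which yields existence and uniqueness of the fixed point $\Phi = \mathcal S[\theta]$; it then proves Fr\'echet differentiability of $\mathcal S$ in a separate step, by explicitly guessing the candidate derivative $\mathcal L[\theta]$ (as the solution operator of a linearised Neumann problem with inhomogeneity $\gamma(\mathcal L[\theta](h), \mathcal S[\theta])$) and verifying that the remainder $\mathcal N[\theta](h) := \mathcal S[\theta+h] - \mathcal S[\theta] - \mathcal L[\theta](h)$ is $o(\|h\|)$ via a quadratic Schauder estimate. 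Your Implicit Function Theorem argument collapses these two steps into one: since $F(\Phi,\theta) = (\Delta\Phi - \mathcal M[\Phi],\ \dz\Phi|_{\G_1} - \theta)$ is quadratic-affine hence real-analytic, and $D_\Phi F(0,0)$ is precisely the linear Neumann Laplacian (which the paper's Theorem \ref{thm:WP_BVP_Poisson} shows to be an isomorphism $X\to\tilde Z$), the IFT delivers both existence and Fr\'echet differentiability (indeed analyticity) of $\mathcal S$ at once. Both approaches rest on the same core linear ingredient --- well-posedness and Schauder estimates for the flat-boundary periodic Neumann--Poisson problem --- and on the same null-Lagrangian observation (the paper records $\int_{\T^2}\mathcal M[\phi] = 0$; you phrase it as $F$ landing in the codimension-one compatibility subspace $\tilde Z$, which is exactly the right way to set up the IFT and a point that must not be omitted, as you note). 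What the paper's more constructive route buys is an explicit radius $R_1^{(k,\al)}$ for the ball, an explicit Lipschitz constant $2\widetilde C_{k,\al}$ for $\mathcal S$, and an explicit formula for $D\mathcal S[\theta] = \mathcal L[\theta]$, all of which are reused quantitatively in the Schauder fixed point argument for the evolution equation in Section \ref{sec:SSG}; the IFT gives these less directly, though they can still be recovered from its proof.
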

In the natural way, we say that $\Phi$ is a \emph{$\cal{k+2}$-classical solution} of \eqref{eq:BVP_zeromean}
if $\Phi\in\cal{k+2}(\bar{\O})$ solves the PDE pointwise everywhere on $\O$, and its restriction to the boundary $\dd\O$ satisfies the given boundary conditions. The zero-mean requirement on $\Phi$ is prescribed to guarantee uniqueness of solution to the boundary value problem. Associated to this solution map is the operator defined below, which determines the activity in the surface semi-geostrophic equation \eqref{eq:SSG}.
\begin{de}\label{def:berp}
The {\bf Neumann-to-Dirichlet operator} $\mathcal T$ associated to system \eqref{eq:SSG} 
is defined to be
\begin{equation}\label{eq:def_R_S}
\mathcal{T}:=\mathcal R\circ\mathcal S:\cal{k+1}(\T^2)\to \cal{k+2}(\T^2),
\end{equation}
where $\mathcal R:\cal{k}(\bar{\O})\to \cal k (\T^2)$
is the classical restriction operator to the set $\mathbb{T}^{2}\times \{1\}$.
\end{de}
The main idea of the proof of theorem \ref{thm:nonlin_BVP}, and thereby the demonstration that $\mathcal{T}$ is well defined, is to employ the Banach Fixed Point theorem to guarantee both existence and uniqueness of solutions
to \eqref{eq:BVP_zeromean}, inspired by the numerical work \cite{BR16:turbulence} by Badin and Ragone on SSG. Indeed, for a given $\ta \in\cal{k+1}(\T^2)$ we define the operator
$T^{(\ta)}:\cal {k+2}(\bar{\O})\to\cal {k+2}(\bar{\O})$ as
\begin{equation}\label{eq:def_Tta}
	T^{(\ta)}:= \mathcal A_{\ta}\circ \mathcal M,
\end{equation}
where the Monge-Amp\`ere-type operator $\mathcal M:\cal {k+2}(\bar{\O})\to\cal k(\bar{\O})$ is defined as
\begin{equation*}
	\mathcal M[\phi] := \dxx \phi \dyy\phi - (\dxy\phi)^2,
\end{equation*}
and the operator $\mathcal A_{g}:\cal {k}(\bar{\O})\to\cal {k+2}(\bar{\O})$ is simply the solution operator $\mathcal A_g:f\mapsto u$ associated to the linear Neumann boundary value problem on $\O$ given by
\begin{equation}\label{eq:BVP_Poisson}
    \left\{
    \begin{array}{l}
        \Delta u = f,  \vspace{1mm}\\
    \dz u|_{\G_0}=0,  \vspace{1mm}\\
    \dz u|_{\G_1}=g,  \vspace{1mm}\\
    {\fint_{\O}u = 0,}
    \end{array}
    \right.
  \end{equation}
for any given $g\in\cal{k+1}(\T^2)$. It follows that a fixed point of $T^{(\ta)}$ is a classical solution 
of \eqref{eq:BVP_zeromean} on $\O$ of class $\cal{k+2}$.

The fact that all boundary value problems under study in the sequel are, roughly speaking, periodic in two coordinate directions and non-periodic but of bounded extent in the other makes their analysis slightly awkward. For instance, in the case of the pure Poisson Neumann boundary value problem, one cannot simply apply routine techniques from \cite{GT:ellipticPDEs} to understand their well-posedness. One could, for instance, employ techniques from the monograph \cite{Grubb:book} in order to employ a Method of Reflections-type argument for elliptic equations on the unbounded strip $\Omega\subset\mathbb{R}^{3}$. This is not the approach we adopt in this work, however.

We begin our approach to the proof of Theorem \ref{thm:nonlin_BVP} by showing well-posedness of the linear BVP
\eqref{eq:BVP_Poisson} in Section \ref{sec:linBVP}. Following this, we look to verify the hypotheses of the Banach Fixed Point Theorem
to the operator $T^{(\ta)}$ on a suitable ball in $\cal{k+2}(\bar{\O})$ in Section \ref{thm:nonlin_BVP} thereafter.

\subsection{Analysis of the Poisson Problem \texorpdfstring{\eqref{eq:BVP_Poisson}}{(3.5)}}\label{sec:linBVP}
We begin by recalling the necessary compatibility condition that needs
to be satisfied by the inhomogeneity $f$ and the boundary datum $g$
for the Poisson problem \eqref{eq:BVP_Poisson} be well posed.
  
  \begin{lem}[Compatibility condition] \label{pr:compat_cond}
    A necessary condition for the problem \eqref{eq:BVP_Poisson} to have
    a classical $C^2$-solution on $\Omega$ is that $f$ and $g$ are
    compatible in the sense that
    \begin{equation*}
    \int_{\Omega} f = \int_{\T^2} g .
    \end{equation*}
  \end{lem}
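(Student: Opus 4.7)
The plan is to apply the divergence theorem to the identity $\Delta u = \operatorname{div}(\nabla u)$ after integrating over $\Omega$, exploiting the fact that the periodic directions contribute no boundary terms.

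First, I would integrate both sides of the PDE $\Delta u = f$ over $\Omega = \T^2 \times (0,1)$ to obtain
\[
\int_{\Omega} \Delta u \, dx = \int_{\Omega} f \, dx.
\]
Since $u \in C^2(\bar\Omega)$, I rewrite the left-hand side as $\int_{\Omega} \operatorname{div}(\nabla u)\, dx$ and apply the divergence theorem. Because $\Omega$ is a product of the closed manifold $\T^2$ (which has no boundary) with the interval $(0,1)$, the topological boundary $\partial \Omega$ consists exactly of the two disjoint pieces $\Gamma_0 = \T^2 \times \{0\}$ and $\Gamma_1 = \T^2 \times \{1\}$, with outward unit normals $-e_3$ and $e_3$ respectively. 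Concretely, one may verify this by working on a fundamental domain $[0,1)^2 \times (0,1)$ in $\R^3$: the contributions from opposite faces in the $x_1$ and $x_2$ directions cancel by periodicity of $\nabla u$, leaving only the top and bottom faces.

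Next, I would use the prescribed boundary conditions to evaluate the resulting surface integral. On $\Gamma_0$ the outward normal derivative is $-\partial_z u = 0$, while on $\Gamma_1$ it is $\partial_z u = g$. Hence
\[
\int_{\partial \Omega} \nabla u \cdot n \, dS = \int_{\Gamma_1} g \, dS - \int_{\Gamma_0} 0 \, dS = \int_{\T^2} g \, dx_1 dx_2,
\]
which combined with the previous display yields the desired identity $\int_{\Omega} f = \int_{\T^2} g$.

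There is essentially no substantial obstacle here; the statement is a standard necessary condition for the solvability of a Neumann problem. The only mild care required is to justify that the periodic directions contribute nothing to the boundary integral, which follows either from viewing $\T^2$ as a compact manifold without boundary or, equivalently, from cancellation of the periodic face integrals on a fundamental domain. The $C^2$ regularity of $u$ up to $\bar\Omega$ is precisely what makes the application of the divergence theorem legitimate.
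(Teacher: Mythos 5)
Your proof is correct and is exactly the standard argument the authors have in mind; the paper in fact omits the proof entirely, treating the compatibility condition as a well-known fact for Neumann problems. Your careful justification that the periodic faces contribute nothing to the boundary integral is the only point worth spelling out, and you handle it properly.
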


  The well-posedness of the system \eqref{eq:BVP_Poisson}, and hence the 
  well-posedness of the operator $\mathcal A_{g}$, is proved by means of 
  Schauder theory. Let us first state the various notions of solution to
  the Poisson problem with which we work in the sequel.
  
  \begin{de}
  Given $f\in L^2_{\loc}(\Omega)$ and $g\in H^1_{\loc}(\T^2)$, 
  a function $u$ is said to be:
  \begin{itemize}
  \item a \emph{weak solution} of \eqref{eq:BVP_Poisson} if $u\in H^1_{\loc}(\Omega)$ and 
      \begin{equation*}
	-\int_{\Omega} \nabla u\cdot \nabla \phi + \int_{\T^2} g \phi|_{\G_1} 
	  = \int_{\Omega} f\phi \qquad \forall \phi\in \ccinf_c(\bar{\O});
      \end{equation*}
  \item a \emph{strong solution} of \eqref{eq:BVP_Poisson} if $u\in H_{\loc}^2(\Omega)$ and
      \begin{equation*}
      \Delta u = f \quad \text{in }L_{\loc}^2(\Omega) \qquad \text{and}\qquad 
	\text{Tr}_{1} [\partial_{x_3} u] = g,
	  \ \ \text{Tr}_{0} [\partial_{x_3} u] = 0 \quad \text{in }L_{\loc}^2(\T^2),
      \end{equation*}
      where $\Tr_i$ is the trace operator 
      $\Tr_i:H^1(\Omega)\to L^2(\G_i)$, for $i\in\{0,1\}$. 
  \end{itemize}
  \end{de}
We now state the expected result on existence and uniqueness of classical solutions of the Poisson problem \eqref{eq:BVP_Poisson}.
  \begin{thm}[Existence and uniqueness of classical solutions to the Poisson problem]
  \label{thm:WP_BVP_Poisson}
   Given $f\in\cal0(\bar{\O})$ and compatible $g\in\cal1 (\T^2)$,
   there exists a unique classical solution $u\in\cal2(\bar{\O})$
   of \eqref{eq:BVP_Poisson}. More generally, for $k\in\N$, if $f\in\cal k (\bar {\O})$ and $g\in\cal{k+1}(\T^2)$, 
   then $u\in\cal{k+2}(\bar{\O})$.
   Furthermore, there exists a constant $\widetilde C_{k,\al}>0$ such that 
   \begin{equation}\label{eq:estimate_BVP_Poisson}
    \|u\|_{\cal{k+2}(\bar{\O})} \leq \widetilde C_{k,\al}\left(\|f\|_{\cal k(\bar{\O})}+ \|g\|_{\cal{k+1}(\T^2)}\right).
   \end{equation}
  \end{thm}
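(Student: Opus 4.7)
The plan is to proceed in three classical steps: uniqueness by an energy argument, weak existence by Lax--Milgram, and classical regularity by Schauder estimates. Uniqueness is immediate: if $u_1,u_2$ are two classical solutions, their difference $w$ is harmonic on $\O$ with zero Neumann data on both $\G_0$ and $\G_1$; multiplying by $w$ and applying the divergence theorem yields $\int_{\O}|\nabla w|^2=0$, so $w$ is constant, and the zero-mean condition forces $w\equiv 0$.

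For weak existence I would work on the Hilbert space $V:=\{v\in H^1(\O):\fint_{\O} v=0\}$, which is complete under the Dirichlet inner product thanks to the Poincar\'e--Wirtinger inequality. The bilinear form $a(u,v):=\int_{\O}\nabla u\cdot\nabla v$ is bounded and coercive on $V$, while the linear functional $\ell(v):=-\int_{\O}fv+\int_{\G_1}g\,\Tr_1 v$ is bounded on $V$ by the trace theorem. The compatibility condition of Lemma \ref{pr:compat_cond} ensures that $\ell$ vanishes on constants, hence is well defined on $V\simeq H^1(\O)/\R$, and Lax--Milgram supplies a unique weak solution $u\in V$.

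To upgrade the weak solution to a classical $\cal{k+2}$-solution I would appeal to classical Schauder theory. Interior $\cal{k+2}$-regularity on compact subsets of $\O$ is standard for Poisson's equation with H\"older right-hand side, and boundary regularity at the flat components $\G_0,\G_1$ follows from the boundary Schauder estimates for the Neumann problem (see e.g.\ Gilbarg--Trudinger, Theorem~6.30, together with the higher-order iterates obtained by differentiating in the tangential directions). One thereby obtains
\[
\|u\|_{\cal{k+2}(\bar\O)}\leq C\bigl(\|f\|_{\cal{k}(\bar\O)}+\|g\|_{\cal{k+1}(\T^2)}+\|u\|_{\linf(\bar\O)}\bigr),
\]
and the $\linf$-term can be absorbed either by a standard contradiction-and-compactness argument leveraging uniqueness, or directly via the Lax--Milgram bound combined with Sobolev embedding and interpolation.

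I expect the only real (and rather minor) obstacle to be that $\O=\T^2\times(0,1)$ is not one of the canonical model domains treated in standard Schauder references. This can be bypassed either by unfolding to $\R^2\times(0,1)$ with doubly periodic data and applying the classical interior/boundary estimates locally, which is harmless since those estimates are translation-invariant in the tangential directions; or, after subtracting a sufficiently regular lift of $g$, by extending the residual via successive even reflection across $\G_0$ and $\G_1$ to recast the homogeneous-Neumann problem on the closed manifold $\T^2\times(\R/2\Z)\simeq\T^3$, on which Schauder theory is entirely routine. Either route reduces the proof to long-established regularity theory.
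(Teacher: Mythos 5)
Your proof is correct, and it reaches the same conclusion, but by a genuinely different route at two key points. For weak existence the paper does not apply Lax--Milgram directly to the full mixed problem as you do: it decomposes $u=v+w$, obtaining $v$ (Poisson with homogeneous Neumann data) by Lax--Milgram, and constructing the harmonic part $w$ (with Neumann datum $g$ on $\Gamma_1$) explicitly as a tangential Fourier series with $\cosh(2\pi|k|x_3)/\sinh(2\pi|k|)$ profiles, then verifying that the truncated sums are uniformly bounded in $H^2(\Omega)$ and converge weakly to $w$. This constructive decomposition pays off again in your Step 3: rather than absorbing the $\|u\|_{L^\infty}$ term in the Schauder estimate by a compactness--contradiction argument or by Sobolev embedding and interpolation, the paper bounds $\|w\|_{L^\infty}$ directly from the Fourier representation via Cauchy--Schwarz and Plancherel, and bounds $\|v\|_{L^\infty}$ via Sobolev embedding, Poincar\'e, and the integration-by-parts identity $\|D^2 v\|_{L^2}=\|\Delta v\|_{L^2}$. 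Your route is shorter and relies only on standard machinery; the paper's is fully constructive and avoids abstract compactness in the derivation of the Schauder bound. The reflection to $\T^3\simeq\T^2\times(\R/2\Z)$ that you propose is a clean alternative to the paper's invocation of Lieberman and Agmon--Douglis--Nirenberg for boundary regularity on the mixed periodic/bounded domain, and would arguably streamline Step II of the paper's argument; the uniqueness step via the energy identity is the same in both.
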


  \begin{proof} 
  We proceed by first demonstrating the  existence of weak solutions of the Poisson problem, and then in turn proving higher regularity thereof. Following this, we demonstrate the validity of the Schauder estimate \eqref{eq:estimate_BVP_Poisson}.
  
{\em I. Existence of a Weak Solution.} For the existence of weak solutions of the Poisson boundary value problem, we consider the following two auxiliary problems:
 \begin{equation}
   \label{eq:auxil_BVP_vw}
   \left\{
   \begin{array}{l}
    \Delta v = f,  \vspace{1mm} \\ 
    \dz v|_{\G_0}=0,  \vspace{1mm}\\
    \dz v|_{\G_1}=0,  \vspace{1mm}\\
    { \fint_{\Omega} v =0},
   \end{array}\right.
   \hspace{2cm}
    \left\{
    \begin{array}{l}
    \Delta w = 0,  \vspace{1mm}\\
    \dz w|_{\G_0}=0,  \vspace{1mm}\\
    \dz w|_{\G_1}= g,  \vspace{1mm}\\
    { \fint_{\Omega}w =0}.
    \end{array}\right.
    \end{equation}
    The existence of a weak solution $v\in H^1(\O)$ to
    the first problem above is guaranteed by a standard application of the Lax-Milgram Theorem 
    (see for instance \cite[\S~5]{GT:ellipticPDEs}).
    For the solution $w$ of the second problem, the aim is to show that the 
    function formally defined on $\O$ by
    \begin{equation*}
	    w(x',x_3)=\sum_{\substack{k\in\Z^2\\k\neq(0,0)}} 
		\frac{\hat g_k \cosh(2\pi |k|x_3)}{2\pi|k|\sinh(2\pi|k|)}\, 
		e^{2\pi i k\cdot x'} \qquad \forall (x',x_3)\in\Omega,
    \end{equation*}
    is a weak solution of the Laplace problem, where $x':=(x_1,x_2)$ and 
    $\hat g_k$ is the $k$\textsuperscript{th} Fourier coefficient of $g$ for $k\in\mathbb{Z}^{2}$.
    By the Cauchy-Schwarz inequality, Plancherel lemma and Maclaurin-Cauchy test for series, it follows that $w$ admits the bound
    \begin{align}\label{eq:C0_bound_w}
        \|w\|_{\linf(\O)}&\leq\frac{\coth(2\pi)}{2\pi} 
            {\sqrt{ \sum_{k\in\Z^2} |k|^2 |\hat g_k|^2}}
            {\sqrt{\sum_{\substack{k\in\Z^2 \\ k\neq (0,0)}}\frac{1}{|k|^4} }} \notag\\
            &\leq \frac{\coth (2\pi)}{2\sqrt{\pi}}\|g\|_{H^1(\T^2)}.
    \end{align}
    In particular, $w$ is obtained as 
    the weak limit in $H^2(\O)$ of the sequence 
    $\{w_M\}_{M\in\N}$ of truncated sums, namely
    \begin{equation*}
    w_M(x',x_3):= \sum_{\substack{k\in\Z^2\\0<|k|\leq M }} 
        \frac{\hat g_k \cosh(2\pi |k|x_3)}{2\pi|k|\sinh(2\pi|k|)}\, e^{2\pi i k\cdot x'}
        \qquad \forall (x',x_3)\in\T^2\times(0,1), \ \forall M\in\N.
    \end{equation*}
Indeed, it is readily verified that $w_M$ is the classical solution
    of the Laplace problem with Neumann boundary conditions
    $\dz w_M|_{\G_0} = 0$ and 
    $\dz w_M|_{\G_1} = g_M$,
    where $g_M$ is the projection of $g$ onto its set of $M$\textsuperscript{th}-order Fourier modes, namely 
    \begin{equation*}
        g_M(x'):=\sum_{\substack{k\in\Z^2\\0\leq|k|\leq M }}
            \hat g_k e^{2\pi i k\cdot x'}\qquad \forall x'\in\T^2,\ \forall M\in\N.
    \end{equation*}
The sequence $\{w_M\}_{M\in\N}$ is 
    uniformly bounded in $H^2(\O)$, owing to the fact that one can show that 
    \begin{equation*}
    	\|D^2 w_M\|_{L^2(\O)}\leq C\|g\|_{H^1(\T^2)},
    \end{equation*}
    where the constant $C>0$ does not depend on the truncation parameter $M$.
    By a suitable version of Poincar\'e's inequality for functions periodic in only two co-ordinate directions, we conclude that $\{w_{M}\}_{N\in\mathbb{N}}$ is weakly relatively-compact in $H^2(\O)$, and thereby infer the existence of a (relabelled) convergent subsequence to an element in $H^2$. By uniqueness of the strong $L^2$-limit, the $H^2$-weak limit is $w$.
    It is therefore only left to show that $w$ is a weak solution of the Neumann boundary value problem for Laplace's equation in 
    \eqref{eq:auxil_BVP_vw}. Indeed, as we have that
    $w_M\wto w$ in $H^2$, for any $\phi\in\ccinf(\Omega)$ it follows that
    \begin{equation*}
        \int_{\Omega}\Delta w \phi = \lim_{M\to\infty} \int_{\Omega}\Delta w_M \phi =0.
    \end{equation*}
As far as the boundary conditions are concerned, we check that for any $\phi\in\ccinf(\Omega)$, the following holds true:
    \begin{align*}
        0&=\lim_{M\to\infty} \int_{\Omega}\Delta w_M\phi 
        = \lim_{M\to\infty}\left(-\int_{\Omega} \nabla w_M\cdot \nabla \phi 
            + \int_{\Gamma_1} g_M \phi|_{\Gamma_1}\right)\\
        &= -\int_{\Omega}\nabla w\cdot\nabla \phi 
            + \int_{\Gamma_1}g \phi|_{\Gamma_1}
        = \int_{\Gamma_1} \left(g - \Tr_{1}[\dz w]\right)\phi|_{\Gamma_1}.
    \end{align*}
    In particular, for any $\psi\in\ccinf(\T^2)$ we have that
    \begin{equation*}
        \int_{\T^2} \left(g - \Tr_{1}[\dz w]\right)\psi= 0,
    \end{equation*}
    therefore $g = \dz w|_{\Gamma_1}$ in $L^2(\T^2)$,
    and we deduce that $w$ is a weak solution of the given boundary value problem. By the reasoning above, we conclude that the function $u:=v+w\in H^1(\O)$ is a weak solution of the Neumann boundary value problem for the Poisson equation . 
    
{\em II. Higher Regularity of Weak Solution.}  We now show that $u$ is a classical solution of \eqref{eq:BVP_Poisson}, not only a weak solution thereof.
    By elliptic interior regularity 
    (Theorem 9.19 in \cite[\S~9.6]{GT:ellipticPDEs}), 
    we know that $u\in\cal{2}(\O)$.
    By Theorem 5.54 in \cite[\S~5]{Lieberman13:elliptic} 
    applied in the setting of the smooth unbounded domain $\R^2\times(0,1)$,
    the weak solution $u$ belongs also to $\cal 1(\bar{\O})$.
    Hence, $u\in \cal2(\O)\cap\cal1(\bar{\O})$.
    We now consider $u$ as a function defined on $\R^2\times (0,1)$ that is $1$-periodic in the $x$ and $y$ directions, 
    and apply Theorem 6.26 from \cite[\S~6.7]{GT:ellipticPDEs} to show that $u$ is twice differentiable up to the
    upper and lower boundaries of $\Omega$, i.e. $u\in\cal2(\bar{\O})$.
    In the more general case that $f\in\cal k(\bar{\O})$ and $g\in \cal{k+1}(\T^2)$, one can prove by induction
    the Neumann-data equivalent of Theorem 6.19 in \cite[\S~6.4]{GT:ellipticPDEs},
    using Theorem 6.26 in \cite[\S~6.7]{GT:ellipticPDEs} as the first step of the induction argument,
    to show that $u\in\cal{k+2}(\bar{\O})$. 
    Uniqueness of smooth solutions of this BVP follows by standard energy methods applied to differences of candidate solutions.

{\em III. The Schauder Estimate.} We conclude the proof of the theorem by demonstrating the claimed Schauder estimate. It is a well known fact (\cite[Thm. 7.3]{ADN:boundary_est_elliptic}) that 
    \begin{equation*}
    \|u\|_{\cal{k+2}(\bar{\O})} \lesssim \|f\|_{\cal{k}(\bar{\O})}
        + \| g \|_{\cal{k+1}(\T^2)} + \|u\|_{\mathcal C^0(\bar{\O})},
    \end{equation*}
 All that remains is the construction of a bound on the $L^{\infty}$-norm of $u$. 
 We have proved in \eqref{eq:C0_bound_w} that $\|w\|_{\linf(\O)}\lesssim \|g\|_{\cal{k+1}(\T^2)}$. We now derive an analogous bound for the other additive contribution $v$ to $u$, i.e. we show that 
  \begin{equation*}
    \|v\|_{L^{\infty}(\O)}\lesssim \|f\|_{L^2(\O)}.
   \end{equation*}
By the Sobolev Embedding Theorem and the appropriate variant of Poincar\'e's inequality, one has that 
    \begin{equation*}
      \|v\|_{L^{\infty}(\O)}\lesssim \|v\|_{H^2(\O)} \lesssim \|D^2 v\|_{L^2(\O)}.
    \end{equation*}
 Now let $(\phi_n)_{n\in\N}\subset \ccinf(\O)$ be such that 
    $\phi_n\to v$ in $H^2(\O)$ and $\dz \phi_n=0$ on $\Gamma_0\cup \Gamma_1$. 
    Then, for any $n\in\N$, an application of integration by parts yields that
    \begin{align*}
      \|D^2\phi_n\|_{L^2}^2 =\|\Dl \phi_n\|^2_{L^2(\O)}.
    \end{align*}
    Therefore, strong convergence in $H^2(\Omega)$ of the sequence $\{\phi_n\}_{n\in\N}$ allows us to deduce that   
    \begin{align*}
      \|v\|_{L^{\infty}}\lesssim \|D^2 v \|_{L^2} = \lim_{n\to\infty}\|D^2\phi_n\|_{L^2} 
	= \lim_{n\to\infty}\|\Dl\phi_n\|_{L^2} = \|\Dl v\|_{L^2}=\|f\|_{L^2}.
    \end{align*}
This concludes the proof of the theorem.	
  \end{proof}


  \subsection{Proof of Theorem \ref{thm:nonlin_BVP}}

We now aim to prove Theorem \ref{thm:nonlin_BVP}
    using a fixed point argument. We first note that if $\phi\in C^2(\T^2)$, then it holds that
  \begin{equation*}
    \int_{\T^2}\mathcal M[\phi]=0.
  \end{equation*}
By definition of the operator $T^{(\ta)}$, the operator
  $\mathcal A_{\ta}$ is evaluated at $\mathcal M[\Phi]$, which 
  has zero mean and hence the compatibility
  condition for solvability of the Neumann problem \eqref{eq:BVP_zeromean} requires us to define $\mathcal A_{\ta}$ for 
  $\ta \in\cal{k+1}(\T^2)$ such that
  \begin{equation*}
    \int_{\T^2} \ta  = 0.
  \end{equation*}
  Secondly, we note that the Schauder estimate \eqref{eq:estimate_BVP_Poisson} implies the following estimate on the operator $\mathcal A_{g}$:
    \begin{equation}\label{eq:Poisson_estimate}
        \|\mathcal A_g[f]\|_{\cal{k+2}(\bar{\O})}\leq \widetilde C_{k,\al}
            (\|f\|_{\cal k(\bar{\O})}+\|g\|_{\cal{k+1}(\T^2)}),
    \end{equation}
for any $f\in C^{k, \alpha}(\overline{\Omega})$ and $g\in C^{k+1, \alpha}(\mathbb{T}^{2})$. In turn, the following proposition follows from an easy calculation.
\begin{pr}\label{pr:M_est}
    Let $k\in\N$. For any $f_{1}, f_{2}\in C^{k+2,\alpha}(\bar{\O})$, one has that
    \begin{equation*}
    \|\mathcal M[f_{1}]\|_{C^{k,\alpha}(\bar{\O})} \leq 2\|f_{1}\|^2_{C^{k+2,\alpha}(\bar{\O})},
    \end{equation*}
    and
    \begin{equation*}
    \|\mathcal M[f_{1}]-\mathcal M[f_{2}]\|_{C^{k,\alpha}(\bar{\O})} \leq 2 \left(\|f_{1}\|_{C^{k+2,\alpha}(\bar{\O})}
    + \|f_{2}\|_{C^{k+2,\alpha}(\bar{\O})}\right)\|f_{1}-f_{2}\|_{C^{k+2,\alpha}(\bar{\O})}.
    \end{equation*}
\end{pr}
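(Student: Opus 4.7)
The plan is to treat both inequalities as direct consequences of the Banach algebra property of the Hölder space $C^{k,\alpha}(\bar{\O})$, namely that pointwise products satisfy
\begin{equation*}
\|uv\|_{C^{k,\alpha}(\bar{\O})}\leq \|u\|_{C^{k,\alpha}(\bar{\O})}\|v\|_{C^{k,\alpha}(\bar{\O})}
\end{equation*}
(under the natural norm convention), together with the trivial observation that each second partial derivative of $f\in C^{k+2,\alpha}(\bar{\O})$ satisfies $\|\dd_{ij}f\|_{C^{k,\alpha}(\bar{\O})}\leq \|f\|_{C^{k+2,\alpha}(\bar{\O})}$ for $i,j\in\{1,2\}$.

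For the first bound, I would simply apply the triangle inequality to the two-term expression $\mathcal M[f_1]=\dxx f_1\,\dyy f_1-(\dxy f_1)^{2}$ and use the algebra property on each summand, yielding
\begin{equation*}
\|\mathcal M[f_1]\|_{C^{k,\alpha}(\bar{\O})}\leq \|\dxx f_1\|_{C^{k,\alpha}(\bar{\O})}\|\dyy f_1\|_{C^{k,\alpha}(\bar{\O})}+\|\dxy f_1\|_{C^{k,\alpha}(\bar{\O})}^{2}\leq 2\|f_1\|_{C^{k+2,\alpha}(\bar{\O})}^{2}.
\end{equation*}

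For the Lipschitz-type estimate, the key step is to rewrite $\mathcal M[f_1]-\mathcal M[f_2]$ as a sum of three products, each containing a factor of the form $\dd_{ij}(f_1-f_2)$. Adding and subtracting $\dxx f_1\,\dyy f_2$ and factorising the difference of squares produces the identity
\begin{equation*}
\mathcal M[f_1]-\mathcal M[f_2]=\dxx f_1\,\dyy(f_1-f_2)+\dyy f_2\,\dxx(f_1-f_2)-(\dxy f_1+\dxy f_2)\,\dxy(f_1-f_2).
\end{equation*}
The triangle inequality and the algebra property then deliver three contributions bounded respectively by $\|f_1\|_{C^{k+2,\alpha}(\bar{\O})}\|f_1-f_2\|_{C^{k+2,\alpha}(\bar{\O})}$, by $\|f_2\|_{C^{k+2,\alpha}(\bar{\O})}\|f_1-f_2\|_{C^{k+2,\alpha}(\bar{\O})}$, and by $(\|f_1\|_{C^{k+2,\alpha}(\bar{\O})}+\|f_2\|_{C^{k+2,\alpha}(\bar{\O})})\|f_1-f_2\|_{C^{k+2,\alpha}(\bar{\O})}$, which sum to the claimed bound.

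The argument presents no substantive obstacle; it is essentially bookkeeping once the decomposition above is in hand. The only mildly delicate point is that the multiplicative constant in the Hölder algebra inequality be exactly $1$ in order to recover the displayed constant $2$; if the adopted norm convention introduces a combinatorial constant $C_{k,\alpha}>1$ in the product inequality, one simply obtains $2C_{k,\alpha}$ in place of $2$, which is immaterial for the subsequent fixed-point argument in Section \ref{sec:NtD}.
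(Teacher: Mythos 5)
Your proof is correct, and since the paper gives no argument for this proposition --- it merely asserts that it ``follows from an easy calculation'' --- your decomposition is exactly the calculation the authors had in mind. The identity
\[
\mathcal M[f_1]-\mathcal M[f_2]=\dxx f_1\,\dyy(f_1-f_2)+\dyy f_2\,\dxx(f_1-f_2)-(\dxy f_1+\dxy f_2)\,\dxy(f_1-f_2)
\]
checks out (the cross terms $\mp\,\dxx f_1\,\dyy f_2$ cancel), and the three resulting product bounds sum to precisely $2(\|f_1\|+\|f_2\|)\|f_1-f_2\|$ in the $C^{k+2,\alpha}$ norms. Your caveat about the algebra constant is also well placed: with the H\"older norm the paper uses elsewhere (a $\max$ over derivative sup-norms plus a top-order seminorm), the submultiplicativity constant for $C^{k,\alpha}(\bar\O)$ is not exactly $1$ for $k\geq 1$ because of the Leibniz-rule coefficients, so the displayed constant $2$ should really be read as $2C_{k,\alpha}$. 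This is harmless for the subsequent Banach fixed-point argument, which only requires the contraction constant to be made strictly less than $1$ by shrinking the radii $R_1^{(k,\alpha)},R_2^{(k,\alpha)}$, but it is worth recording that the radii in \eqref{eq:def_R1R2} implicitly absorb such a constant.
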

With these observations in hand, we now present the proof of the main theorem of this section. 

\begin{proof}[Proof of Theorem \ref{thm:nonlin_BVP}]
    For any $k\in\N$, consider the ball in $\cal{k+2}(\bar{\O})$ defined by
    \begin{equation*}
        \mathrm X^{(k,\al)}:= \left\{ \phi \in \cal{k+2}(\bar{\O}) :
        \|\phi\|_{\cal{k+2}} \leq R_1^{(k,\al)}, \int_{\O}\phi = 0 \right\},
    \end{equation*}
    and the ball in $\cal{k+1}(\T^2)$ given by
    \begin{equation*}
        \mathrm Y^{(k,\al)}:= \left\{ \ta\in\cal {k+1}(\T^2) :
            \|\ta\|_{\cal{k+1}} \leq R_2^{(k,\al)}, \int_{\T^2}\ta =0 \right\},
    \end{equation*}
    with the radii $R_1^{(k,\al)}$ and $R_2^{(k,\al)}$ satisfying
    \begin{equation}\label{eq:def_R1R2}
    R_1^{(k,\al)}\leq\frac 1{8\widetilde C_{k,\al}}, \qquad
    R_2^{(k,\al)}\leq \min \left\{\frac1{8 \widetilde C_{k,\al}^2}, \frac{R_1^{(k,\al)}}{\widetilde C_{k,\al}} - 2 (R_1^{(k,\al)})^2 \right\}.
    \end{equation}
We proceed by steps, showing (i) existence and uniqueness of solutions
    to \eqref{eq:BVP_zeromean}, (ii) continuity of the solution operator $\mathcal S$, and then (iii) its Fr\'echet 
    differentiability.

    \noi 
{\em I. Existence and Uniqueness.} In order to prove existence and uniqueness of solutions in $\cal{k+2}(\bar{\O})$
    we use a Banach fixed point argument by showing that the 
    operator $T^{(\ta)}:\mathrm X^{(k,\al)}\to\cal{k+2}(\bar{\O})$ defined in \eqref{eq:def_Tta}
    is a contraction on $\mathrm X^{(k,\al)}$. The estimate \eqref{eq:Poisson_estimate}
    on the operator $\mathcal A_g$ and 
    Proposition \ref{pr:M_est} imply that for any $\phi\in \mathrm X^{(k,\al)}$, one has that
    \begin{equation*}
        \|T^{(\ta)}[\phi]\|_{\cal{k+2}} \leq R_1^{(k,\al)},
    \end{equation*}
while for any $\phi,\psi\in \mathrm X^{(k,\al)}$, it holds that
    \begin{equation*}
        \|T^{(\ta)}[\phi]-T^{(\ta)}[\psi]\|_{\cal{k+2}}\leq \frac12\|\phi-\psi\|_{\cal{k+2}}.
    \end{equation*}
In addition, $T^{(\ta)}[\phi]$ has zero mean over $\mathbb{T}^{2}\times (0, 1)$. Therefore, it follows that $T^{(\ta)}$ is a contraction on the complete metric space $\mathrm X^{(k,\al)}$,
    and by the Banach Fixed Point Theorem we deduce the existence of a unique fixed point $\Phi$ 
    for $T^{(\ta)}$ in $\mathrm X^{(k,\al)}$. Owing to this observation, we define the operator $\mathcal{S}: \mathrm Y^{(k,\al)}\to \mathrm X^{(k,\al)}$ pointwise by
    $\mathcal S[\ta]:=\Phi$, where $\Phi$ is the unique fixed point of $T^{(\ta)}$ in $\mathrm X^{(k,\al)}$.
   
   \noi 
{\em II. Continuity of the Operator $\mathcal S$.}
    Using Schauder estimates the reader can  verify that $\mathcal S$ is continuous in the natural H\"older topologies. 

    \noi
{\em III. Fr\'{e}chet Differentiability of the Operator $\mathcal S$.} Let us now demonstrate the Fr\'echet differentiability of 
    $\mathcal S:\mathrm Y^{(k,\al)}\to \mathrm X^{(k,\al)}$.
    For any $\ta\in \mathrm Y^{(k,\al)}$, define the linear  operator
    $\mathcal L[\ta]:\mathrm Y^{(k,\al)} \to \cal{k+2}(\bar{\O})$ 
    as the solution operator on $\mathrm X^{(k,\al)}$
    associated to the following linear boundary value problem:
    \begin{equation*}
        \left\{
        \begin{array}{l}
        \Dl \mathcal L[\ta](h) = \g(\mathcal L[\ta](h),\mathcal S[\ta]), \vspace{1mm}\\
        \dz \mathcal L[\ta](h) |_{\G_0}=0, \vspace{1mm}\\
        \dz \mathcal L[\ta](h)|_{\G_1}= h, \vspace{1mm}\\
        \int_{\O}\mathcal L[\ta](h)=0,
        \end{array}\right.
    \end{equation*}
    where the bilinear symmetric map 
    $\g:\cal{k+2}(\bar{\O})\times\cal{k+2}(\bar{\O})\to\cal k(\bar{\O})$ 
    is defined as
    \begin{equation*}
        \g(f,g)=\dxx f\dyy g + \dyy f \dxx g - 2 \dxy f\dxy g.
    \end{equation*}
One notes that for $f_{1}, f_{2}\in\cal{k+2}(\bar{\O})$, it holds that
    \begin{equation}\label{eq:com_g_norm}
        \|\g(f_{1}, f_{2})\|_{\cal k}\leq 4\|f_{1}\|_{\cal{k+2}}\|f_{2}\|_{\cal{k+2}}
    \end{equation}
    and in turn that
    \begin{align}\label{eq:bound_Frech_der}
        \|\mathcal L[\ta](h)\|_{\cal{k+2}}\leq 2\widetilde C_{k,\al}\|h\|_{\cal{k+1}},
    \end{align}
 which is a consequence of the Schauder estimate \eqref{eq:estimate_BVP_Poisson}. Let us now show that $\mathcal L[\ta]$ is the Fr\'echet derivative of $\mathcal S$.
    For any $\ta, h\in \mathrm Y^{(k,\al)}$ such that
    $\ta+h\in \mathrm Y^{(k,\al)}$, we consider the nonlinear 
    operator $\mathcal N[\ta]:h\mapsto \mathcal S[\ta + h] 
    - \mathcal S[\ta] - \mathcal L[\ta](h)$,
    which is the solution operator associated to the following BVP:
    \begin{equation*}
        \left\{
        \begin{array}{l}
        \Dl \mathcal N[\ta](h)  = \mathcal M[\mathcal L[\ta](h)] 
            + \mathcal M[\mathcal N[\ta](h)] 
            + \g\left(\mathcal N[\ta](h) ,\mathcal L[\ta](h) +\mathcal S[\ta]\right), \vspace{1mm}\\
        \dz \mathcal N[\ta](h)|_{\G_0\cup \G_1}=0, \vspace{1mm}\\
        \int_{\O}\mathcal N[\ta](h)  =0.
        \end{array}\right.
    \end{equation*}
    The above is proved by considering $\Delta \mathcal S[\theta+h]-\Delta \mathcal S[\theta]$, rewriting $     \mathcal S[\theta+h]=\mathcal S[\theta] + \mathcal L[\theta](h)+\mathcal N[\theta](h)$ and considering the equation for $ c\mathcal L[\theta](h)$.
In order to claim that $\mathcal L[\ta]$ is the Fr\'echet derivative of $\mathcal S$ at $\theta$, it remains to show that 
    \begin{equation*}
    	\lim_{h\to 0} \frac{\|\mathcal N[\ta](h)\|_{\cal{k+2}}}{\|h\|_{\cal{k+1}}}=0.
    \end{equation*}
    By the estimates \eqref{eq:estimate_BVP_Poisson}, \eqref{eq:com_g_norm} 
    and Proposition \ref{pr:M_est}, the norm of the operator $\mathcal N[\ta]$ admits the bound given by
    \begin{align*}
        \|\mathcal N[\ta](h)\|_{\cal{k+2}} 
        &\leq 2\widetilde C_{k,\al} \|\mathcal L[\ta](h)\|_{\cal{k+2}}^2
            + 2\widetilde C_{k,\al} 
             \|\mathcal N[\ta](h)\|_{\cal{k+2}}^2 \\
        &\qquad + 4\widetilde C_{k,\al}\|\mathcal N[\ta](h)\|_{\cal{k+2}} \|\mathcal S[\ta]+\mathcal L[\ta](h)\|_{\cal{k+2}}.
    \end{align*}
 This estimate can be rewritten simply as 
    \begin{align*}
        \beta\|\mathcal N[\ta](h)\|_{\cal{k+2}}
        \leq 2\widetilde C_{k,\al}  \|\mathcal L[\ta](h)\|^2_{\cal{k+2}},
    \end{align*}
    where $\beta:=1 - 2\widetilde C_{k,\al}\|\mathcal N[\ta](h)\|_{\cal{k+2}} 
            -4\widetilde C_{k,\al} \|\mathcal S[\ta]+\mathcal L[\ta](h)\|_{\cal{k+2}}$.
    As $h\to 0$ in $\mathcal{C}^{k+1, \alpha}(\overline{\Omega})$, the quantity $\beta$ tends 
    to $1-4\widetilde C_{k,\al} \|\mathcal S[\ta]\|_{k+2,\al}{\geq} \frac12$,
    by continuity of the operator $\mathcal S:\mathrm Y^{(k,\al)}\to\mathrm X^{(k,\al)}$.
    Hence, if $h$ is taken to be in a sufficiently-small ball around the zero map, it follows that $\beta\geq \frac14$ and
    using the bound \eqref{eq:bound_Frech_der} on the norm of $\mathcal L[\ta]$,
    we infer that
    \begin{align*}
        \frac{\|\mathcal N[\ta](h)\|_{\cal{k+2}}}{\|h\|_{\cal{k+1}}}
        \leq 8\widetilde C_{k,\al} \frac{\|\mathcal L[\ta](h)\|^2_{\cal{k+2}}}{\|h\|_{\cal{k+1}}}
        \leq32 \widetilde C_{k,\al}^3\|h\|_{\cal{k+1}}.
    \end{align*} 
    This concludes the proof of Theorem \ref{thm:nonlin_BVP}.
  \end{proof}
  
  \begin{rmk}
  	By an application of the Mean Value Theorem 
  	for maps with range in a Banach space (see Theorem 4 in \cite[\S~3.2]{Cheney:Banach_ana}),
  	one may conclude that the operator $\mathcal S$ is globally Lipschitz on 
  	$\mathrm Y^{(k,\al)}$ with Lipschitz constant $2\widetilde C_{k,\al}$.
  \end{rmk}

  \begin{rmk}
  	By utilising a similar argument, Theorem \ref{thm:nonlin_BVP} can be established
  	in the context of Sobolev spaces as opposed to H\"{o}lder spaces, i.e. there exists a Fr\'echet-differentiable map $\mathcal S$ defined on a closed ball in $H^{k+1}(\T^2)$
  	and valued in $H^{k+2}(\O)$ such that
  	$\mathcal S[\ta]$ is a $H^{k+2}$-strong solution of \eqref{eq:BVP_zeromean},
  for $2\leq k\in\N$. Observe that we require $k\geq 2$ in order to guarantee that $H^{k+2}$ be a Banach algebra, which is required for the relevant estimates on the Monge-Amp\`ere 
  operator $\mathcal M$.
  \end{rmk}


\section{Existence and Uniqueness of Classical Solutions of SSG}\label{sec:SSG}

By the work of the previous sections, we have constructed the Neumann-to-Dirichlet operator that determines the activity in the active scalar equation that defines SSG, namely equation \eqref{eq:SSG}. However, in order to demonstrate the local-in-time existence of classical solutions of the 
active scalar equation itself, we perform a Schauder fixed point argument. In broad strokes, given a suitable time-dependent and spatially-incompressible vector field $w$, we first construct a solution $\alpha_{w}$ to the IVP associated to the {\em passive} scalar equation given by
\begin{equation*}
\partial_{t}\alpha_{w}+(w\cdot\nabla)\alpha_{w}=0
\end{equation*}
with given fixed initial datum $\alpha_{w}(\cdot, 0):=\theta_{0}$ that is independent of $w$, and in turn build a new time-dependent
vector field $\widetilde{w}(\cdot, t):= \nabla^{\perp}\mathcal{T}[\alpha_{w}(\cdot, t)]$ pointwise in time by way of the Neumann-to-Dirichlet map $\mathcal{T}$. Any fixed point of the
operator $\mathcal{W}:w\mapsto\widetilde{w}$ (in an appropriate space) furnishes a classical solution to 
the active scalar equation \eqref{eq:SSG}. 
 
While the claimed result, Theorem \ref{thm:SSG}, holds true for any integer $k\geq 0$, for the sake of clarity of presentation we consider only the argument pertaining to the local-in-time existence of classical solutions 
     $\ta\in\cbe0([0,\tau_0];\cal 1(\T^2))\cap \mathcal C^1((0,\tau_0)\times\T^2)$ of the active scalar equation with $\ta_0\in\cal2(\T^2)$, for some $\tau_{0}>0$ depending on the initial datum, $\alpha$ and $\beta$. A similar argument can be employed to prove the existence of 
    solutions $\ta\in\cbe0([0,\tau_k];\cal {k+1}(\T^2))\cap \mathcal C^1((0,\tau_k);\mathcal C^{1}(\T^2))$ with initial data  
    $\ta_0\in\cal {k+2}(\T^2)$, for $k\geq 1$ and time of existence $\tau_{k}>0$, the details of which we omit here. We shall, however, include the proof of uniqueness of classical solutions in the case that the initial datum $\theta_{0}$ lies in a suitable small ball in $\mathcal{C}^{k, \alpha}(\mathbb{T}^{2})$ for $k\geq 4$.
    
\begin{proof}[Proof of Theorem \ref{thm:SSG}]
Firstly we show existence of smooth solutions, and later we prove uniqueness. 
\noi
{\em I. Existence of the dynamics.}
Suppose that $\ta_0\in \cal2(\T^2)$ with 
    \begin{equation}\notag
    \int_{\T^2}\ta_0 = 0 , \hspace{1cm} \|\ta_0\|_{\cal2}\leq \rho\leq\min\left\{ \frac{R_2^{(0,\al)}}{8},\frac{R_1^{(0,\al)}}{48\widetilde C_{(0,\al)}}\right\},
    \end{equation}
    where $R_1^{(0,\al)}$ and $R_2^{(0,\al)}$ are defined in \eqref{eq:def_R1R2}, and fix $\tau_{0}>0$ to be any positive number such that
\begin{equation}\notag
0<\tau_{0}<\min\left\{
\frac{\ln 2}{(2+\alpha)R_1^{(0, \alpha)}}, (R^{(0, \alpha)}_{1})^{-\frac{1}{1-\beta}}
\right\}.
\end{equation} 
Let $0<\alpha'<\alpha<1$ and $0<\beta'<\beta<1$, and consider the topological space $(\mathcal{D}, \mathsf{T}_{(\alpha', \beta')})$, where
\begin{equation}\notag
\mathcal{D}:=\left\{
w\in C^{0, \beta}([0, \tau_{0}]; C^{1, \alpha}_{\sigma}(\T^{2}; \R^{2}))\,:\,\|w\|_{C^{0, \beta}_{t}C^{1, \alpha}_{x}}\leq R_{1}^{(0, \alpha)}
\right\}
\end{equation}
and $\mathsf{T}_{(\alpha', \beta')}$ is the topology on $\mathcal{D}$ generated by the $\|\cdot\|_{C^{0, \beta'}_{t}C^{1, \alpha}_{x}}$-norm. The notation $C^{1, \alpha}_{\sigma}(\T^{2}; \R^{2}))$ is here used to denote the space of elements of $C^{1, \alpha}(\T^{2}; \R^{2}))$ that are divergence-free pointwise everywhere on $\T^2$. 
One may check that the topological space $(\mathcal{D}, \mathsf{T}_{(\alpha', \beta')})$ is convex, compact and closed. Our principle operator of interest $\mathcal{W}:\mathcal{D}\rightarrow C^{0, \beta}([0, \tau_{0}]; C^{1, \alpha}_{\sigma}(\mathbb{T}^{2}, \mathbb{R}^{2}))$ is defined pointwise as
\begin{equation*}
(\mathcal{W}[w])(\cdot, t):=\nabla^{\perp}\mathcal{R}\circ \mathcal{S}[\theta_{w}(\cdot, t)]
\end{equation*}
for all $t\in [0, \tau_{0}]$, where $\theta_{w}$ denotes the unique classical solution of the IVP given by
\begin{equation*}
\left\{
\begin{array}{l}
\partial_{t}\theta_{w}+(w\cdot\nabla)\theta_{w}=0 \quad \text{on}\hspace{2mm}\mathbb{T}^{2}\times (0, \tau_{0}), \vspace{2mm}\\
\theta_{w}(\cdot, 0)=\theta_{0}.
\end{array}
\right.
\end{equation*}
In order to demonstrate that $\mathcal{W}$ is well defined, we need to verify that $\theta_{w}(\cdot,t)\in \mathrm Y^{(0,\al)}$ for all $t\in[0,\tau_0]$. By Proposition \ref{app:pr:estimate_ta}, we have that
\begin{equation*}
\|\theta(\cdot, t)\|_{1, \alpha}\leq \|\theta_{0}\|_{2, \alpha}\left(
e^{\|w\|\tau_{0}}+2^{\frac{1-\alpha}{2}}e^{2\|w\|\tau_{0}}+\|w\|\tau_{0}e^{(2+\alpha)\|w\|_{\tau_{0}}}
\right)
\end{equation*}
for all $t\in [0, \tau_{0}]$ and so, by our choice of $\tau_{0}$, it follows that
\begin{equation*}
\|\theta(\cdot, t)\|_{1, \alpha}\leq 8\|\theta_{0}\|_{2, \alpha}\leq 8\rho\leq R_{2}^{(0, \alpha)}.
\end{equation*}
Moreover, by estimate in Proposition \ref{app:pr:estimate_ta}, it follows that $\mathcal{W}[w]\in C^{0, \beta}([0, \tau_{0}]; C^{1, \alpha}_{\sigma}(\mathbb{T}^{2}, \mathbb{R}^{2}))$. Now, in order to apply the Schauder Fixed Point Theorem to the operator $\mathcal{W}$, we must verify that both $\mathcal{W}(\mathcal{D})\subset\subset\mathcal{D}$ and that $\mathcal{W}$ is continuous w.r.t. the topology $\mathsf{T}_{(\alpha', \beta')}$ on $\mathcal{D}$. Note that for the former condition, we need only show that $\mathcal{W}(\mathcal{D})\subseteq\mathcal{D}$ owing to the fact that $(\mathcal{D}, \mathsf{T}_{(\alpha', \beta')})$ is compact.

Let us first demonstrate that $\mathcal{W}(\mathcal{D})\subset\mathcal{D}$. Suppose that $w\in \mathcal{D}$. One can check by using the estimates that
\begin{align*}
    \|\mathcal{W}[w]\|_{\mathcal C^{0, \beta}_{t}\mathcal C^{1, \alpha}_{x}} 
    & \leq 2\widetilde{C}_{0,\al}\|\theta_{w}\|_{C_{t}^{0, \beta}C^{1, \alpha}_{x}} \vspace{2mm} \\ 
    &\leq  2\widetilde{C}_{0,\al}\|\ta_0\|e^{\|w\|\tau}
    \big\{1 + 2^{\frac{1-\al}2}e^{\|w\|\tau} + \|w\|\tau e^{(1+\al)\|w\|\tau}\big\}\\
    &\quad + 4\widetilde{C}_{0,\al}\|\ta_0\| \|w\|\tau^{1-\be} e^{\|w\|\tau} 
    \big\{1+e^{\al\|w\|\tau} + 2^{\frac{1-\al}2}e^{\|w\|\tau} + \|w\|\tau e^{(1+\al)\|w\|\tau}\big\}\vspace{2mm}\\
    &\leq 48\widetilde C_{0,\al} \|\ta_0\|\vspace{2mm}\\
    &\leq R_1^{(0,\al)},
    \end{align*}
whence $\mathcal{W}[w]\in\mathcal{D}$. It remains to verify that the map $\mathcal{W}$ is continuous in the topology specified above. It is enough to show that $\mathcal{W}$ is sequentially continuous. As such, let $\{w_{j}\}_{j=1}^{\infty}\subset\mathcal{D}$ be a sequence that converges in $(\mathcal{D}, \mathsf{T}_{(\alpha', \beta')})$, i.e. $w_{j}\rightarrow w$ as $j\rightarrow \infty$ for some $w\in\mathcal{D}$. Let $\{\theta_{w_{j}}\}_{j=1}^{\infty}$ denote the associated sequence of classical solutions to the family of IVPs given by
\begin{equation*}
\left\{
\begin{array}{l}
\partial_{t}\theta_{w_{j}}+(w_{j}\cdot\nabla)\theta_{w_{j}}=0, \vspace{2mm}\\
\theta_{w_{j}}(\cdot, 0)=\theta_{0}.
\end{array}
\right.
\end{equation*}
As we have noted above, the sequence $\{\theta_{w_{j}}\}_{j\in\N}$ is uniformly bounded in $\mathcal{C}^{0, \beta}([0, \tau_{0}]; \mathcal{C}^{1, \alpha}(\mathbb{T}^{2}))$, and therefore there exists a subsequence $\{\theta_{w_{j(k)}}\}_{k=1}^{\infty}\subseteq\{\theta_{w_{j}}\}_{j=1}^{\infty}$ which one can show converges to $\theta_{w}$ in $\mathcal{C}^{0, \beta'}([0, \tau_{0}]; \mathcal{C}^{1, \alpha'}(\mathbb{T}^{2}))$. Owing to the fact that all subsequences of $\{\theta_{w_{j}}\}_{j=1}^{\infty}$ converge to $\theta_{w}$ by uniqueness of classical solutions of the limiting initial-value problem for the transport equation associated to $w$, it follows that $\mathcal{W}$ is continuous. The existence of a local-in-time classical solution of the IVP for the active scalar equation follows readily from an application of Schauder's Fixed Point Theorem.

\noi
{\em II. Uniqueness of the dynamics.}    
  Let $w,v\in\cbe0([0,\tau];\cal3_{\s}(\T^2;\R^2))$ be two vector fields
  and $\ta, \psi\in\cbe0([0,\tau];\cal3(\T^2))\cap C^1((0,\tau)\times\T^2)$ be
  such that $(\ta,w)$ and $(\psi,v)$ are solutions of the active scalar equation 
  \eqref{eq:SSG} with initial condition 
  $\ta(\cdot,0)=\psi(\cdot,0)=\ta_0\in\cal4(\T^2)$.
  We prove that $\|\ta(\cdot,t)-\psi(\cdot,t)\|_{\cal1}=0$
  by demonstrating that 
    \begin{equation}\label{eq:uniqueness}
        \|\ta(\cdot,t)-\psi(\cdot,t)\|_{\cal1}\lesssim  \int_0^t \|\ta(\cdot,r)-\psi(\cdot,r)\|_{\cal1}\,dr.
    \end{equation}
 The claimed uniqueness result then follows by Gr\"onwall's inequality.

 Let us denote the flow map associated with $v$ by $G$ and that associated to $w$ by $F$. In what follows, for notational brevity we write $\|w\|:=\|w\|_{\cbe0_t\cal3_x}$ and 
  $\|\ta_0\|:=\|\ta_0\|_{\cal4}$. We record the estimates below.  
  \begin{itemize}
    \item [(i)] We start noticing that 
        $|F(t,s,x)-G(t,s,x)|\lesssim \int_s^t \|\ta(\cdot,r)-\psi(\cdot,r)\|_{\cal1}    \,dr$. In fact, an application of the Fundamental Theorem of Calculus provides that
         \begin{align*}
        	|F(t,s,x)-G(t,s,x)| &= \bigg| \int_s^t w(F(r,s,x),r)-v(G(r,s,x),r) \,dr \bigg|\\
                & \leq \|w\| \int_s^t |F(r,s,x)-G(r,s,x)|\,dr + \int_s^t \|w(\cdot,r) - v(\cdot,r)\|_{\cal1}\,dr\\
                & \leq \|w\| \int_s^t |F(r,s,x)-G(r,s,x)|\,dr \\
                &\quad+ 2\widetilde C_{0,\al} \int_s^t \|\ta(\cdot,r)-\psi(\cdot,r)\|_{\cal1}\,dr.
        \end{align*}
        By Gr\"onwall's inequality, we infer that
        \begin{align*}
        	|F(t,s,x)-G(t,s,x)| \leq 2\widetilde C_{0,\al} e^{\|w\| |t-s|} \int_s^t \|\ta(\cdot,r)-\psi(\cdot,r)\|_{\cal1}\,dr.
        \end{align*}
    
\item [(ii)] In a similar manner, we prove that 
        $|DF(t,s,x)-DG(t,s,x)|\lesssim \int_s^t \|\ta(\cdot,r)
        -\psi(\cdot,r)\|_{\cal1}\,dr$:
        \begin{align*}
        	|DF&(t,s,x)-DG(t,s,x)| \\
                & = \bigg| \int_s^t Dw(F(r,s,x),r)DF(r,s,x) - Dv(G(r,s,x),r)DG(r,s,x)\bigg|\,dr\\
                & \leq \|w\| \|DF(t,s,\cdot)\|_{\linf} \int_s^t |F(r,s,x)-G(r,s,x)|\,dr\\
                    &\qquad + \|DF(t,s,\cdot)\|_{\linf} \int_s^t \|w(\cdot,r)-v(\cdot,r)\|_{\cal1}\,dr\\
                    &\qquad + \|v\| \int_s^t |DF(r,s,x)-DG(r,s,x)|\,dr\\
                & \leq 2\widetilde C_{0,\al} \|w\||t-s| e^{2\|w\||t-s|} \int_s^t \|\ta(\cdot,r)- \psi(\cdot,r)\|_{\cal1}\,dr\\
                    &\qquad + 2\widetilde C_{0,\al} e^{\|w\||t-s|} \int_s^t \|\ta(\cdot,r)- \psi(\cdot,r)\|_{\cal1}\,dr\\
                    &\qquad + \|v\|\int_s^t |DF(r,s,x)-DG(r,s,x)|\,dr.
        \end{align*}
        Once again, by Gr\"onwall's inequality, we infer that
        \begin{align*}
        	|DF(t,s,x)-DG(t,s,x)|\leq Q \int_s^t \|\ta(\cdot,r)-\psi(\cdot,r)\|_{\cal1}\,dr,
        \end{align*}
        with $Q:=2\widetilde C_{0,\al} e^{(\|w\|+\|v\|)|t-s|}(1+\|w\||t-s|e^{\|w\||t-s|})$.
        
\item [(iii)] Here, we seek to show that $[F(t,s,\cdot)-G(t,s,\cdot)]_{1,\al}\lesssim 
        \int_s^t \|\ta(\cdot,r)-\psi(\cdot,r)\|_{\cal1}\,dr$,
         where we use the notation from \cite{GT:ellipticPDEs} for 
        denoting the H\"older seminorm $[\cdot]_{k,\al}$.
        For a given $\phi\in\cal2(\T^2)$, we are interested in first proving that 
        \begin{equation*}
        	[\phi(F(t,s,\cdot))-\phi(G(t,s,\cdot))]_{0,\al}\lesssim \int_s^t \|\ta(\cdot,r)-\psi(\cdot,r)\|_{\cal1}\,dr.
        \end{equation*}
        Defining $\eta :[0,1]\to \R$ as 
        \begin{align*}
        	\eta(\ld) := \phi(F(t,s, \ld x + (1-\ld)y),t) - \phi(G(t,s, \ld x + (1-\ld)y),t),
        \end{align*}
        it follows that $\eta$ is differentiable and 
        \begin{align*}
        	\phi(F(t,s, x ),t) - \phi(G(t,s, x),t) - \phi(F(t,s, y),t) + \phi(G(t,s, y),t) = \eta(1)- \eta (0) = \int_0^1 \eta'(\ld)\,d\ld.
        \end{align*}
        We focus on $\eta'$, in particular:
        \begin{align*}
            \eta'(\ld) &=  DF(t,s,\ld x + (1-\ld)y)\nabla\phi(F(t,s,\ld x + (1-\ld)y),t) (x-y)\\
                &\qquad -DG(t,s,\ld x + (1-\ld)y) \nabla\phi(G(t,s,\ld x + (1-\ld)y))(x-y)\\
            &=  \big( DF(t,s,\ld x + (1-\ld)y)- DG(t,s,\ld x + (1-\ld)y)\big)\\
            &\qquad\nabla\phi(F(t,s,\ld x + (1-\ld)y),t)(x-y)\\
            &\quad +DG(t,s,\ld x + (1-\ld)y) \\
            &\qquad\big(\nabla\phi(F(t,s,\ld x + (1-\ld)y),t) - \nabla\phi(G(t,s,\ld x + (1-\ld)y),t) \big)(x-y),
        \end{align*}
        hence
        \begin{align*}
        	|\eta'(\ld)| &\leq \|\phi\|_{\cal2} |x-y| |DF(t,s,\ld x + (1-\ld)y)- DG(t,s,\ld x + (1-\ld)y)|\\
                &\qquad + \|DG(t,s,\cdot)\|_{\linf}\|\phi\|_{\cal2} |F(t,s,\ld x + (1-\ld)y) - G(t,s,\ld x + (1-\ld)y)||x-y|\\
        	&\leq Q\|\phi\|_{\cal2}|x-y| \int_s^t \|\ta(\cdot,r)-\psi(\cdot,r)\|_{\cal1}\,dr\\
                &\qquad + 2\widetilde C_{0,\al}e^{(\|w\|+\|v\|)|t-s|}|x-y|\int_s^t \|\ta(\cdot,r)-\psi(\cdot,r)\|_{\cal1}\,dr\\
            & = \sqrt2^{1-\al}\|\phi\|_{\cal2}\left(Q+2\widetilde C_{0,\al} e^{(\|w\|+\|v\|)|t-s|}\right)\int_s^t \|\ta(\cdot,r)-\psi(\cdot,r)\|_{\cal1}\,dr|x-y|^{\al}.
        \end{align*}
        Thus, it follows that
        \begin{align*}
        	|\phi(F(t,s, x ),t) &- \phi(G(t,s, x),t) - \phi(F(t,s, y),t) + \phi(G(t,s, y),t)| \\
        	&=|\eta(1)-\eta(0)|\leq \int_0^1 |\eta'(\ld)|\,d\ld\\
            &= K|x-y|^{\al}\int_s^t \|\ta(\cdot,r)-\psi(\cdot,r)\|_{\cal1}\,dr,
        \end{align*}
        with $K:=\sqrt2^{1-\al}\|\phi\|_{\cal2}\left(Q+2\widetilde C_{0,\al} e^{(\|w\|+\|v\|)|t-s|}\right)$. Now we look at the following H\"older seminorm:
        \begin{align*}
        	|DF&(t,s,x)-DG(t,s,x)-DF(t,s,y)+DG(t,s,y)| \\
        	&= \bigg| \int_s^t Dw(F(r,s,x),r)DF(r,s,x) - Dv(G(r,s,x),r)DG(r,s,x) \\
                &\qquad- Dw(F(r,s,y),r)DF(r,s,y) + Dv(G(r,s,y),r)DG(r,s,y) \,dr\bigg|\\
        	&\leq \int_s^t |Dw(F(r,s,x),r)-Dw(G(r,s,x),r)||DF(r,s,x)-DF(r,s,y)|\,dr\\
                &\qquad + \int_s^t \underbrace{|Dw(F(r,s,x),r) - Dw(G(r,s,x),r)-Dw(F(r,s,y),r) + Dw(G(r,s,y),r)|}_{=\eta(1)-\eta(0) \text{ with }\phi=\dd_iw} \\
                &\qquad \qquad \cdot|DF(r,s,y)|\,dr\\
                &\qquad + \int_s^t |Dw(G(r,s,x),r) - Dv(G(r,s,x),r)| |DF(r,s,x)-DF(r,s,y)|\,dr\\
                &\qquad + \int_s^t |Dw(G(r,s,x),r) - Dv(G(r,s,x),r) -Dw(G(r,s,y),r) + Dv(G(r,s,y),r)|\\
                &\qquad \qquad \cdot|DF(r,s,y)|\,dr\\
                &\qquad + \int_s^t |Dv(G(r,s,x),r) - Dv(G(r,s,y),r)| |DF(r,s,x)-DG(r,s,x)|\,dr\\
            & \leq \|w\|[F(t,s,\cdot)]_{1,\al}|x-y|^{\al}\int_s^t |F(r,s,x)-G(r,s,x)|\,dr\\
                &\qquad + \|DF(t,s,\cdot)\|_{\linf} |\eta(1)-\eta(0)| |t-s|\\
                &\qquad + [F(t,s,\cdot)]_{1,\al}|x-y|^{\al} \int_s^t\|w(\cdot,r)-v(\cdot,r)\|_{\cal1} \,dr\\
                &\qquad + \Lip G(t,s,\cdot)^{\al} |x-y|^{\al}\|DF(t,s,\cdot)\|_{\linf}\int_s^t \|w(\cdot,r)-v(\cdot,r)\|_{\cal1}\,dr\\
                &\qquad + \Lip G(t,s,\cdot)^{\al} |x-y|^{\al} \|v\| \int_s^t |DF(r,s,x)-DG(r,s,x)|\,dr\\
            & \leq \|w\||t-s| 4\widetilde C_{0,\al} e^{2\|w\||t-s|} \int_s^t \|\ta(\cdot,r)-\psi(\cdot,r)\|_{\cal1}\,dr |x-y|^{\al}\\
                &\qquad +  \sqrt2 ^{1-\al} K |t-s| e^{\|w\||t-s|}\int_s^t \|\ta(\cdot,r)-\psi(\cdot,r)\|_{\cal1}\,dr |x-y|^{\al}\\
                &\qquad + 4\widetilde C_{0,\al} e^{\|w\||t-s|}\int_s^t \|\ta(\cdot,r)-\psi(\cdot,r)\|_{\cal1}\,dr |x-y|^{\al}\\
                &\qquad + 2\widetilde C_{0,\al} e^{(\|w\|+ \al\|v\|)|t-s|} \int_s^t \|\ta(\cdot,r)-\psi(\cdot,r)\|_{\cal1}\,dr |x-y|^{\al}\\
                &\qquad + 2\|v\||t-s| e^{\al\|v\||t-s|} Q \int_s^t \|\ta(\cdot,r)-\psi(\cdot,r)\|_{\cal1}\,dr |x-y|^{\al}.
        \end{align*}
        Therefore, 
        \begin{align*}
        	[F(t,s,\cdot)-G(t,s,\cdot)]_{1,\al}\leq \widetilde K \int_s^t \|\ta(\cdot,r)-\psi(\cdot,r)\|_{\cal1}\,dr,
        \end{align*}
        where we use the notation $\widetilde K = \|w\||t-s| 4\widetilde C_{0,\al} e^{2\|w\||t-s|} + \sqrt2 ^{1-\al} K |t-s| e^{\|w\||t-s|} + 4\widetilde C_{0,\al} e^{\|w\||t-s|} + 2\widetilde C_{0,\al} e^{(\|w\|+ \al\|v\|)|t-s|} +  2\|v\||t-s| e^{\al\|v\||t-s|} Q$.
        
\item [(iv)] Let us now show that $\|\ta(\cdot,t)- \psi (\cdot,t)\|_{\linf}\lesssim 
        \int_0^t \|\ta(\cdot,r)-\psi(\cdot,r)\|_{\cal1}\,dr$. Indeed, one finds that
        \begin{align*}
        	|\ta(x,t)-\psi(x,t)| &\leq \|\ta_0\| |F(0,t,x)-G(0,t,x)|\\
        	&\leq 2\widetilde C_{0,\al} e^{\|w\|t}\|\ta_0\| \int_0^t \|\ta(\cdot,r)-\psi(\cdot,r)\|_{\cal1}\,dr,
        \end{align*}
        and so we are done.   
\item [(v)] Let us now consider an analogous estimate for the difference $\|\nabla 
        \ta(\cdot,t) - \nabla\psi(\cdot,t)\|_{\linf}$. One notes that
        \begin{align*}
        	|\nabla \ta(x,t)- \nabla \psi(x,t)| &\leq |DF(0,t,x)-DG(0,t,x)| |\ta_0(F(0,t,x))| \\
                &\qquad + |DG(0,t,x)||\nabla\ta_0(F(0,t,x))-\nabla\ta_0(G(0,t,x))|\\
            &\leq Q \|\ta_0\|  \int_0^t \|\ta(\cdot,r)-\psi(\cdot,r)\|_{\cal1}\,dr\\
                &\qquad + e^{\|v\|t}\|\ta_0\| |F(0,t,x)-G(0,t,x)|\\
            &= \|\ta_0\| \left(Q + 2\widetilde C_{0,\al} e^{(\|w\|+\|v\|)t}\right)  \int_0^t \|\ta(\cdot,r)-\psi(\cdot,r)\|_{\cal1}\,dr.
        \end{align*}
    
\item [(vi)] Finally, let us consider the seminorm term 
        $[\ta(\cdot,t)-\psi(\cdot,t)]_{1,\al}$. One finds that
        \begin{align*}
        	|\nabla \ta(x,t) -& \nabla \psi(x,t) - \nabla \ta(y,t)\ + \nabla \psi(y,t)|\\
        	& = |DF(0,t,x)\nabla \ta_0(F(0,t,x)) - DG(0,t,x)\nabla\ta_0(G(0,t,x)) \\
                &\qquad -DF(0,t,y)\nabla \ta_0(F(0,t,x)) + DG(0,t,y)\nabla\ta_0(G(0,t,y))|\\
        	&= \big|\left( DF(0,t,x)-DG(0,t,x)\right) \nabla\ta_0(F(0,t,x))\\
                &\qquad + DG(0,t,x) \left( \nabla\ta_0(F(0,t,x)) - \nabla\ta_0(G(0,t,x)) \right)\\
                &\qquad +\left( DF(0,t,y)-DG(0,t,y)\right) \nabla\ta_0(F(0,t,y))\\
                &\qquad + DG(0,t,y) \left( \nabla\ta_0(F(0,t,y)) - \nabla\ta_0(G(0,t,y)) \right)\big|\\
        	&\leq |DF(0,t,x)-DG(0,t,x)| |\nabla\ta_0(F(0,t,x))- \nabla\ta_0(F(0,t,y))|\\
                &\qquad + |DF(0,t,x)-DG(0,t,x)-DF(0,t,y)+DG(0,t,y) |\nabla\ta_0(F(0,t,y))||\\
                &\qquad + |DG(0,t,x)-DG(0,t,y)| |\nabla\ta_0 (F(0,t,x)) - \nabla\ta_0(G(0,t,x))|\\
                &\qquad + |DG(0,t,y)| \\
                &\qquad \quad\underbrace{|\nabla\ta_0 (F(0,t,x))-\nabla\ta_0(G(0,t,x))-\nabla\ta_0(F(0,t,y))+\nabla\ta_0(G(0,t,y))|}
                _{=|\eta(1)-\eta(0)| \text{ with }\phi = \dd_i\ta_0}\\
            &\leq Q\sqrt 2^{1-\al}  \|\ta_0\|e^{\|w\|t}\int_0^t \|\ta(\cdot,r)-\psi(\cdot,r)\|_{\cal1}\,dr|x-y|^{\al}\\
                &\qquad + \|\ta_0\| \widetilde K \int_0^t \|\ta(\cdot,r)-\psi(\cdot,r)\|_{\cal1}\,dr|x-y|^{\al}\\
                &\qquad + \sqrt2^{1-\al} e^{\|v\|t}\|\ta_0\|2\widetilde C_{0,\al} e^{\|w\| |t-s|} \int_s^t \|\ta(\cdot,r)-\psi(\cdot,r)\|_{\cal1}\,dr|x-y|^{\al}\\
                &\qquad + e^{\|v\|t} K \int_s^t \|\ta(\cdot,r)-\psi(\cdot,r)\|_{\cal1}\,dr|x-y|^{\al}.
        \end{align*}
        Therefore, we conclude that
        \begin{equation*}
        	[\ta(\cdot,t)-\psi(\cdot,t)]_{1,\al}\lesssim  \int_0^t \|\ta(\cdot,r)-\psi(\cdot,r)\|_{\cal1}\,dr.
        \end{equation*}
  \end{itemize}
  From the definition of the H\"older norm, (iv), (v) and (vi) imply \eqref{eq:uniqueness}. This ends the proof of existence and uniqueness of classical solution under the stated hypotheses.
 \end{proof}


\section{Closing Remarks}\label{sec:concl}
In this work, we have constructed local-in-time smooth solutions of the surface semi-geostrophic equations. Of course, it is desirable to develop a theory of global-in-time weak solutions of the system, principally due to the expectation that the dynamics generated by SG develops fronts in a finite time. 

One important point upon which we have not touched in this paper is {\em Cullen's Stability Principle} and its applications in the setting of the surface semi-geostrophic equations. Indeed, we have not attempted to show that our local-in-time classical solutions admit the property that they give rise to local-in-time classical solutions of the semi-geostrophic equations in Eulerian co-ordinates. We shall explore this in future work.


\appendix

\section{Auxiliary Results: estimates for the transport equation in H\"older spaces}\label{sec:app}

  In this final section we look at the estimates for the solution to
the passive transport equation 
\begin{equation*}
	\left\{
	\begin{array}{l}
		\dt \ta + w\cdot \nabla \ta =0, \vspace{1mm}\\
		\ta(\cdot,0)= \ta_0,
	\end{array}
	\right.
\end{equation*}
with a given vector field $w\in\cbe0([0,\tau];\cal1(\T^2;\R^2))$
and an initial datum $\ta_0\in\cal2(\T^2)$.

\begin{pr}[Estimates on the solution of the transport equation]
\label{app:pr:estimate_ta}
	Given a bounded $w\in\cbe0([0,\tau];\cal1(\T^2;\R^2))$
    and $\ta_0\in\cal2(\T^2)$, the unique classical solution 
    $\ta\in C^1\left((0,\tau)\times\T^2\right)$
    of the transport equation belongs to $\cbe0\left([0,\tau];\cal1(\T^2)\right)$ and satisfies the following estimates:
    \begin{align}
    \notag
    \|\ta(\cdot,t)\|_{\cal1} &\leq \|\ta_0\|e^{\|w\|t} \left(1 + 2^{\frac{1-\al}2}
        e^{\|w        \|t} + \|w\|te^{(1+\al)\|w\|t}\right)
    	\qquad \forall t\in[0,\tau]\\
    \label{app:eq:estimate_ta}
    \|\ta\|_{\cbe0_t\cal1_x} &\leq  \|\ta_0\|e^{\|w\|\tau}
        \big\{1 + 2^{\frac{1-\al}2}e^{\|w\|\tau} + \|w\|\tau e^{(1+\al)\|w\|\tau}\big\}
        \\ 
    \notag
    &\qquad + 2\|\ta_0\| \|w\|\tau^{1-\be} e^{\|w\|\tau} 
        \big\{1+e^{\al\|w\|\tau} + 2^{\frac{1-\al}2}e^{\|w\|\tau} + \|w\|\tau e^{(1+\al)\|w\|\tau}\big\}.
    \end{align}
    where  $\|w\|:=\|w\|_{\cbe0_t\cal1_x}$ and $\|\ta_0\|:=\|\ta_0\|_{\cal2}$.
\end{pr}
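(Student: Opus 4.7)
The plan is to represent the unique classical solution by the method of characteristics and reduce all the claimed estimates to Gronwall-type bounds on the flow of $w$. Define the flow $F(r,s,x)$ of $w$ by $\partial_{r}F(r,s,x)=w(F(r,s,x),r)$ together with $F(s,s,x)=x$; since $w\in\cbe0([0,\tau];\cal1(\T^{2};\R^{2}))$, classical ODE theory yields a unique flow that is $C^{1,\al}$ in $x$. The unique classical solution of the IVP is then $\ta(x,t)=\ta_{0}(F(0,t,x))$, so the required estimates reduce to quantifying how composition with $F$ interacts with the various seminorms appearing in $\|\cdot\|_{\cbe0_{t}\cal1_{x}}$.

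For the pointwise-in-time spatial bound $\|\ta(\cdot,t)\|_{\cal1}$, I first apply Gronwall to the variational ODE $\partial_{r}DF=(Dw\circ F)\,DF$ with $DF(s,s,x)=I$ to obtain $\|DF(t,s,\cdot)\|_{\linf}\leq e^{\|w\||t-s|}$. A second Gronwall argument, applied to the ODE satisfied by the difference $DF(r,s,x)-DF(r,s,y)$ and combined with the Lipschitz bound $|F(r,s,x)-F(r,s,y)|\leq e^{\|w\||r-s|}|x-y|$, yields $[DF(t,s,\cdot)]_{\al}\leq \|w\||t-s|e^{(1+\al)\|w\||t-s|}$. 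These flow estimates combine with the chain rule $\nabla\ta(x,t)=DF(0,t,x)^{T}\nabla\ta_{0}(F(0,t,x))$ and the standard telescoping decomposition
\[
\nabla\ta(x,t)-\nabla\ta(y,t) = \bigl(DF(0,t,x)-DF(0,t,y)\bigr)^{T}\nabla\ta_{0}(F(0,t,x)) + DF(0,t,y)^{T}\bigl(\nabla\ta_{0}(F(0,t,x))-\nabla\ta_{0}(F(0,t,y))\bigr)
\]
to give the first bracketed expression in \eqref{app:eq:estimate_ta}. The factor $2^{(1-\al)/2}$ enters at the step where one converts the Lipschitz bound $|F(0,t,x)-F(0,t,y)|\leq e^{\|w\|t}|x-y|$ into an $\al$-Hölder bound using $|x-y|\leq\sqrt{2}^{\,1-\al}|x-y|^{\al}$ on $\T^{2}$ of diameter $\sqrt{2}$.

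For the temporal Hölder estimate I first establish a Lipschitz-in-$t$ bound $\|\ta(\cdot,t)-\ta(\cdot,s)\|_{\cal1}\leq K|t-s|$ and then interpolate via $|t-s|\leq\tau^{1-\be}|t-s|^{\be}$, which is the source of the $\tau^{1-\be}$ prefactor. To derive the Lipschitz bound I exploit the flow identity $F(0,t,x)=F(0,s,F(s,t,x))$ to write
\[
\ta(x,t)-\ta(x,s) = \ta_{0}(F(0,s,F(s,t,x)))-\ta_{0}(F(0,s,x)),
\]
and then estimate this quantity and its $\cal1_{x}$-norm by the same chain-rule/telescoping procedure as above, now using the direct bound $|F(s,t,x)-x|\leq\|w\||t-s|$ obtained by integrating the flow ODE. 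The Gronwall constants for $DF$ already established re-enter verbatim, which explains the structural similarity of the two bracketed expressions in \eqref{app:eq:estimate_ta}.

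The principal technical hurdle is the Hölder seminorm estimate $[DF(t,s,\cdot)]_{\al}$: the ODE satisfied by $DF(r,s,x)-DF(r,s,y)$ picks up a source term $Dw(F(r,s,x),r)-Dw(F(r,s,y),r)$ which must be controlled by combining the Hölder seminorm $[Dw(\cdot,r)]_{\al}$ with the Lipschitz regularity of $F(r,s,\cdot)$, and then absorbed via a Gronwall step that produces the characteristic factor $e^{(1+\al)\|w\||t-s|}$. Once this estimate is secured, the remainder is bookkeeping: tracking telescoping-sum constants and Gronwall exponentials to recover the precise constants stated in \eqref{app:eq:estimate_ta}.
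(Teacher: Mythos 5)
Your spatial estimates follow the paper's route essentially verbatim: the representation $\ta(x,t)=\ta_0(F(0,t,x))$, the Gr\"onwall bound on $DF$, the Gr\"onwall bound on the H\"older seminorm $[DF(t,s,\cdot)]_{\al}$ (though note you record the exponent as $(1+\al)$ whereas the final Gr\"onwall step multiplies the source term $\|w\||t-s|e^{(1+\al)\|w\||t-s|}$ by $e^{\|w\||t-s|}$, so the correct exponent is $(2+\al)$), the telescoping of $\nabla\ta(x,t)-\nabla\ta(y,t)$, and the conversion of Lipschitz to $\al$-H\"older on $\T^2$ using $\diam\T^2=\sqrt2$. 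All of this matches the paper.

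The temporal part of your proposal, however, has a genuine gap. You claim a Lipschitz-in-$t$ estimate $\|\ta(\cdot,t)-\ta(\cdot,s)\|_{\cal1}\leq K|t-s|$ and propose to obtain it from the flow composition identity $F(0,t,x)=F(0,s,F(s,t,x))$, viewing $\ta(\cdot,t)-\ta(\cdot,s)$ as $\phi\circ H_t-\phi$ with $\phi:=\ta(\cdot,s)$ and $H_t:=F(s,t,\cdot)$ and then running the same telescoping machinery. The difficulty is that after composing, $\phi=\ta_0\circ F(0,s,\cdot)$ is only of class $\cal1$, not $C^2$: since $w\in C^{0,\be}_tC^{1,\al}_x$, the flow map $F(0,s,\cdot)$ is merely $C^{1,\al}$, and composition destroys the $C^{2,\al}$ regularity of $\ta_0$. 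When you telescope $\nabla\phi\circ H_t-\nabla\phi$ you therefore must fall back on $[\nabla\phi]_{\al}\,\|H_t-\id\|_{\linf}^{\al}\lesssim|t-s|^{\al}$ (or, equivalently, $[DF(0,s,\cdot)]_{\al}\,|F(s,t,x)-x|^{\al}\lesssim|t-s|^{\al}$), which only yields a $C^{0,\al}$-in-time bound on $\nabla\ta$, not Lipschitz. Since the proposition makes no assumption of the form $\al\geq\be$, this is not enough to conclude $\ta\in C^{0,\be}_t\cal1_x$ in general, and it certainly does not reproduce the explicit constants claimed in \eqref{app:eq:estimate_ta}.

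The paper sidesteps this by \emph{not} composing flows: it writes the difference as a time integral, $\nabla\ta(x,t)-\nabla\ta(x,s)=\int_s^t\partial_r\bigl(DF(0,r,x)\nabla\ta_0(F(0,r,x))\bigr)\,dr$, and differentiates \emph{before} composing, so the second derivative falls on $\ta_0$ rather than on the composed function. The integrand then involves only $\|Dw\|_\infty$, $\|DF\|_\infty$, $\|\nabla\ta_0\|_\infty$, $\|D^2\ta_0\|_\infty$ and $\|w\|_\infty$, each bounded by hypothesis, giving a clean $O(|t-s|)$ bound; the same idea applied once more produces the H\"older seminorm bound for $\nabla\ta(\cdot,t)-\nabla\ta(\cdot,s)$. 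This is exactly where the assumption $\ta_0\in\cal2$ (rather than $\cal1$) is used, as the paper explicitly remarks. You should replace your flow-composition step with this time-differentiation argument to close the gap.
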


\begin{proof}
We recall that, for a Lipschitz vector field $w$ and a $\mathcal C^1$ initial
datum,
the existence and uniqueness of a classical solution is 
guaranteed by the Cauchy-Lipschitz theory of ODEs, and the solution $\ta$ is given by the formula
\begin{equation*}
	\ta(x,t) := \ta_0(F(0,t,x)),
\end{equation*}
where $F:\R\times\R\times\T^2\to\R^2$ is the
\emph{generalised flow map} associated to $w$ as the solution of the 
initial value problem
\begin{equation*}
\left\{
  \begin{array}{l}
   \dt F(t,s,x) = w(F(t,s,x),t), \vspace{1mm}\\
   F(s,s,x)=x.
  \end{array}\right.
 \end{equation*}
 In order to look at estimates on the solution $\ta$, 
 we need some bound on the generalised flow map $F$. 
 
\begin{itemize}
 \item [(a)] $\|F(t,s,\cdot)\|_{\linf}\leq \diam \T^2 + \|w\||t-s|$.
        By the Fundamental Theorem of Calculus, one has that
        \begin{align*}
            |F(t,s,x)|&=\bigg| x+ \int_s^t w(F(r,s,x),r)\,dr \bigg|\\
            &\leq |x|+ \|w\||t-s|.
        \end{align*}

 \item [(b)]  $\|DF(t,s,\cdot)\|_{\linf}\leq e^{\|w\||t-s|}$.
        Similarly, 
        \begin{align*}
            |D F(t,s,x)| 
            &= \bigg| I_2 + \int_s^t Dw(F(r,s,x),r)DF(r,s,x)\,dr \bigg|\\
            &\leq 1 + \|w\|\int_s^t |D F(r,s,x)|\,dr.
        \end{align*}
        By the Gr\"onwall's inequality, the estimate holds. Moreover, by the Mean Value Theoreom, also $\Lip F(t,s,\cdot)\leq e^{\|w\||t-s|}$, where $\Lip f$ denotes the global Lipschitz constant of the globally Lipschitz continuous function $f$.
        Observe that we normalise the Fr\"obenius norm for matrices so that $|I_n|=1$. 
        
 \item [(c)] $[F(t,s,\cdot)]_{1,\al}\leq \|w\||t-s|e^{(2+\al)\|w\||t-s|}$.
        With a similar argument, we see that
        \begin{align*}
            |D F(t,s,x)\ -\ & D F(t,s,y)| \\
            &\leq \int_s^t |DF(r,s,x)-DF(r,s,y)||Dw(F(r,s,x))|\,dr\\
                &\qquad + \int_s^t |DF(r,s,y)||Dw(F(r,s,x),r)- Dw(F(r,s,y),r)|\,dr\\
            &\leq  \|w\|\int_s^t|DF(r,s,x)-DF(r,s,y)|\,dr \\
                &\qquad +\int_s^t\|DF(r,s,\cdot)\|_{\linf}[Dw(\cdot,r)]_{0,\al} \Lip F(r,s,\cdot)^{\al}\,dr|x-y|^{\al}\\
            &\leq \|w\|\int_s^t|DF(r,s,x)-DF(r,s,y)|\,dr \\
                &\qquad +\|w\||t-s| e^{(1+\al)\|w\||t-s|}|x-y|^{\al}.
        \end{align*}
        By the Gr\"onwall's lemma, we obtain the estimate above.
\end{itemize}

We present now the estimates on $\ta$, using the representation formula for the solution of the transport equation.
  \begin{itemize}
  	\item [(A)] From the representation formula, it is clear that $\|\ta(\cdot,t)\|_{\linf} \leq \|\ta_0\|$.
  	
  	\item [(B)] In the same way, one proves that $\|\nabla \ta(\cdot,t)\|_{\linf} \leq \|\ta_0\|e^{\|w\|t}$:
  	\begin{align*}
  		|\nabla\ta(x,t)| &= |DF(0,t,x)\nabla\ta_0(F(0,t,x))|\\
  		&\leq \|DF(0,t,\cdot)\|_{\linf}\|\ta_0\|\\
  		&\stackrel{(c)}{\leq} e^{\|w\|t}\|\ta_0\|.
  	\end{align*}
  	
  	\item [(C)] We now focus our attention on the H\"older seminorm and demonstrate that $[\nabla \ta(\cdot,t)]_{0,\al}\leq \|\ta_0\|e^{2\|w\|t}\left(2^{\frac{1-\al}2} + \|w\|te^{\al\|w\|t}\right)$:
  	\begin{align*}
  		|\nabla\ta(x,t)-\nabla\ta(y,t)| &= |DF(0,t,x)\nabla\ta_0(F(0,t,x)) - DF(0,t,y)\nabla\ta_0(F(0,t,y))|\\
  		&\leq |DF(0,t,x)-DF(0,t,y)| |\nabla\ta_0(F(0,t,x))|\\
            &\qquad + |DF(0,t,y)| |\nabla\ta_0(F(0,t,x)) - \nabla\ta_0(F(0,t,y))|\\
        &\leq [DF(0,t,\cdot)]_{0,\al} |x-y|^{\al} \|\ta_0\| \\
            &\qquad + \|DF(0,t,\cdot)\|_{\linf} \Lip \ta_0 \Lip F(0,t,\cdot) |x-y|\\
        &{\leq} \|w\|t e^{(2+\al)\|w\|t}\|\ta_0\| |x-y|^{\al}\\
            &\qquad + e^{2\|w\|t}\|\ta_0\| |x-y|\\
        &\leq \|w\|t e^{(2+\al)\|w\|t}\|\ta_0\| |x-y|^{\al}\\
            &\qquad + 2^{\frac{1-\al}2}e^{2\|w\|t}\|\ta_0\| |x-y|^{\al}\\
        &= \|\ta_0\|e^{2\|w\|t}\left(2^{\frac{1-\al}2} + \|w\|te^{\al\|w\|t}\right)|x-y|^{\al}.
  	\end{align*}
  	
  	\item [(D)] We now can gather together the information from the estimates (A)--(C) to obtain that the first of the estimates in \eqref{app:eq:estimate_ta}: by definition of the H\"older norm (see \cite[\S.~4]{GT:ellipticPDEs}), 
        we have that
        \begin{align*}
        	\|\ta(\cdot,t)\|_{\cal1} &:= \max\{\|\ta(\cdot,t)\|_{\linf}, \|\nabla\ta(\cdot,t)\|_{\linf}\} + [\nabla\ta(\cdot,t)]_{0,\al}\\
        	&\leq\|\ta_0\|e^{\|w\|t} + \|\ta_0\|e^{2\|w\|t}\left(2^{\frac{1-\al}2} + \|w\|te^{\al\|w\|t}\right).
        \end{align*}
    
    \item [(E)] In the following estimates, we look at the H\"older norm of the
        difference $\ta(\cdot,t)-\ta(\cdot,s)$. We start by proving that $\|\ta(\cdot,t)-\ta(\cdot,s)\|_{\linf} \leq \|w\||t-s| \|\ta_0\|$:
        \begin{align*}
            |\ta(x,t)-\ta(x,s)| &= \bigg| \int_s^t \dt \ta(x,r)\,dr\bigg|\\
            &\leq \int_s^t |\dt (\ta_0(F(0,r,x)) )|\,dr\\
            &\leq \int_s^t |\nabla\ta_0(F(0,r,x))| |w(F(0,r,x),0)|\,dr\\
            &\leq \|w\||t-s| \|\ta_0\|.
        \end{align*}
    
    \item [(F)] Here, we look at the distance between the gradients and prove that 
        $\|\nabla\ta(\cdot,t) - \nabla\ta(\cdot,s)\|_{\linf}\leq  2 \|w\| |t-s| \|\ta_0\| e^{\|w\|t}$:
        \begin{align*}
            |\nabla \ta(x,t) - \nabla \ta(x,s)| &=\bigg| \int_s^t \dt \big( DF(0,r,x)\nabla\ta_0(F(0,r,x))\big)\,dr \bigg|\\
            &\leq \int_s^t |\dt (DF(0,r,x)) \nabla\ta_0(F(0,r,x)) \\ 
            &\qquad + DF(0,r,x) D^2\ta_0(F(0,r,x))\dt F(0,r,x)|\,dr\\
            &\leq \int_s^t |Dw(F(0,r,x),0) DF(0,r,x) \nabla\ta_0(F(0,r,x))|\,dr \\
                &\qquad + \int_s^t |DF(0,r,x) D^2\ta_0(F(0,r,x))w(F(0,r,x),0)| \,dr\\
            &\leq \|w\||t-s| \|DF(0,t,\cdot)\|_{\linf} \|\ta_0\| \\
                &\qquad + \|w\||t-s| \|\ta_0\| \|DF(0,t,\cdot)\|_{\linf}\\
            &{\leq}\ 2 \|w\| |t-s| \|\ta_0\| e^{\|w\|t}. 
        \end{align*}

    \item [(G)] The last estimate that we need to conclude the proof is the H\"older seminorm of the
        distance between the gradients, therefore here we demonstrate that $[\nabla\ta(\cdot,t) - \nabla\ta(\cdot,s)]_{0,\al} \leq 2\|\ta_0\| \|w\| e^{\|w\|t} \left(e^{\al\|w\|t} + 2^{\frac{1-\al}2}e^{\|w\|t} + \|w\|te^{(1+\al)\|w\|t}\right)|t-s|$:
        \begin{align*}
            |\nabla \ta(x,t) - \nabla \ta(x,s) - \nabla \ta(y,t) + \nabla \ta(y,s)| 
            &= \bigg| \int_s^t \dt [\nabla \ta(x,r) - \nabla \ta(y,r)] \,dr \bigg|. 
        \end{align*}
        We start studying $\dt [\nabla \ta(x,r) - \nabla \ta(y,r)]$:
        \begin{align*}
            \dt [\nabla \ta(x,r) -& \nabla \ta(y,r)] = \dt \left[ DF(0,r,x)\nabla\ta_0(F(0,r,x)) -  DF(0,r,y)\nabla\ta_0(F(0,r,y))\right]\\
            &= Dw(F(0,r,x),0)DF(0,r,x) \nabla\ta_0(F(0,r,x)) \\
                &\qquad +DF(0,r,x) D^2\ta_0(F(0,r,x)) w(F(0,r,x),0)\\
                &\qquad -Dw(F(0,r,y),0)DF(0,r,y) \nabla\ta_0(F(0,r,y)) \\
                &\qquad -DF(0,r,y) D^2\ta_0(F(0,r,y)) w(F(0,r,y),0)\\
            &= \left\{Dw(F(0,r,x),0) - Dw(F(0,r,y),0)\right\} DF(0,r,x) \nabla\ta_0(F(0,r,x))\\
                &\qquad + Dw(F(0,r,y),0) \left\{DF(0,r,x) -DF(0,r,y) \right\} \nabla\ta_0(F(0,r,x))\\
                &\qquad + Dw(F(0,r,y),0) DF(0,r,y) \left\{\nabla\ta_0(F(0,r,x)) -\nabla\ta_0(F(0,r,y)) \right\}\\
                &\qquad + \left\{ DF(0,r,x) -DF(0,r,y) \right\} D^2\ta_0(F(0,r,x)) w(F(0,r,x),0)\\
                &\qquad + DF(0,r,y) \left\{ D^2\ta_0(F(0,r,x)) - D^2\ta_0(F(0,r,y))\right\} w(F(0,r,x),0)\\
                &\qquad + DF(0,r,y) D^2\ta_0(F(0,r,y)) \left\{w(F(0,r,x),0) - w(F(0,r,y),0)\right\}.
        \end{align*}
        Therefore, when we consider the modulus of the expression above,
        \begin{align*}
            |\dt [\nabla \ta(x,r) -& \nabla \ta(y,r)]| \\
            &\leq |Dw(F(0,r,x),0) - Dw(F(0,r,y),0)| |DF(0,r,x)| |\nabla\ta_0(F(0,r,x))|\\
                &\qquad + |Dw(F(0,r,y),0)| |DF(0,r,x) -DF(0,r,y)| |\nabla\ta_0(F(0,r,x))|\\
                &\qquad + |Dw(F(0,r,y),0)| |DF(0,r,y)| |\nabla\ta_0(F(0,r,x)) -\nabla\ta_0(F(0,r,y))|\\
                &\qquad + |DF(0,r,x) -DF(0,r,y)| |D^2\ta_0(F(0,r,x))| |w(F(0,r,x),0)|\\
                &\qquad + |DF(0,r,y)| |D^2\ta_0(F(0,r,x)) - D^2\ta_0(F(0,r,y))| |w(F(0,r,x),0)|\\
                &\qquad + |DF(0,r,y)| |D^2\ta_0(F(0,r,y))| |w(F(0,r,x),0) - w(F(0,r,y),0)|\\
            &\leq [Dw(\cdot,0)]_{0,\al} \Lip F(0,r,\cdot)^{\al} |x-y|^{\al} \|DF(0,r,\cdot)\|_{\linf} \|\ta_0\|\\
                &\qquad + \|Dw(\cdot,0)\|_{\linf}  [DF(0,r,\cdot)]_{0,\al} |x-y|^{\al} \|\ta_0\|\\
                &\qquad + \|Dw(\cdot,0)\|_{\linf} \|DF(0,r,\cdot)\|_{\linf} \Lip \nabla\ta_0 \Lip F(0,r,\cdot) |x-y|\\
                &\qquad + [DF(0,r,\cdot)]_{0,\al} |x-y|^{\al} {\color{black}\|D^2\ta_0\|_{\linf}} \|w(\cdot,0)\|_{\linf}\\
                &\qquad + \|DF(0,r,\cdot)\|_{\linf} {\color{black}[D^2\ta_0]_{0,\al}}\Lip F(0,r,\cdot)^{\al}|x-y|^{\al} \|w(\cdot,0)\|_{\linf}\\
                &\qquad + \|DF(0,r,\cdot)\|_{\linf} {\color{black}\|D^2\ta_0\|_{\linf}} \Lip w(\cdot,0) \Lip F(0,r,\cdot) |x-y|\\
            &\leq \|w\| e^{(1+\al)\|w\|r} |x-y|^{\al}  \|\ta_0\|\\
                &\qquad + \|w\|^2 r e^{(2+\al)\|w\|r} |x-y|^{\al} \|\ta_0\|\\
                &\qquad + \|w\| e^{2\|w\|r} \|\ta_0\| |x-y|\\
                &\qquad + \|w\|^2 r e^{(2+\al)\|w\|r} |x-y|^{\al} \|\ta_0\| \\
                &\qquad + \|w\|e^{(1+\al)\|w\|r} \|\ta_0\| |x-y|^{\al} \\
                &\qquad + \|w\|e^{2\|w\|r} \|\ta_0\| |x-y|\\
            &= 2\|\ta_0\| \|w\| e^{\|w\|r} \left(e^{\al\|w\|r} + 2^{\frac{1-\al}2}e^{\|w\|r} + \|w\|re^{(1+\al)\|w\|r}\right)|x-y|^{\al}.
        \end{align*}
        Observe that we require $\ta_0\in\cal2(\T^2)$: the estimate would not work for initial data in $\cal1(\T^2)$.
        All the quantities in the terms above are increasing in $r$, therefore we estimate 
        \begin{align*}
            |\nabla \ta(x,t) - \nabla &\ta(x,s) - \nabla \ta(y,t) + \nabla \ta(y,s)| 
            \leq \int_s^t |\dt [\nabla \ta(x,r) - \nabla \ta(y,r)]| \,dr\\
            &\leq 2\|\ta_0\| \|w\| e^{\|w\|t} \left(e^{\al\|w\|t} + 2^{\frac{1-\al}2}e^{\|w\|t} + \|w\|te^{(1+\al)\|w\|t}\right)|t-s||x-y|^{\al}.
        \end{align*}

    \item [(H)] By combining the estimates (E)--(G), one obtains the estimate on the full H\"older norm:
        $\|\ta(\cdot,t) - \ta(\cdot,s)\|_{\cal1} \leq 2\|\ta_0\| \|w\| e^{\|w\|t} \left(1+e^{\al\|w\|t} + 2^{\frac{1-\al}2}e^{\|w\|t} + \|w\|te^{(1+\al)\|w\|t}\right)|t-s|$.
        
        By definition of the H\"older norm, we have that
        \begin{align*}
            \|\ta(\cdot,t) - \ta(\cdot,s)\|_{\cal1} 
                &:= \max\{ \|\ta(\cdot,t) - \ta(\cdot,s)\|_{\linf}, 
                    \|\nabla\ta(\cdot,t) - \nabla\ta(\cdot,s)\|_{\linf}\}
                    + [\ta(\cdot,t) - \ta(\cdot,s)]_{1,\al}\\
                &\leq 2\|\ta_0\| \|w\| e^{\|w\|t} \left(1+e^{\al\|w\|t} + 2^{\frac{1-\al}2}e^{\|w\|t} + \|w\|te^{(1+\al)\|w\|t}\right)|t-s|.
        \end{align*}

    \item [(I)] Finally, we can prove the second estimate in \eqref{app:eq:estimate_ta}.
    By the estimates (D) and (H), one has that 
    \begin{align*}
    	\|\ta\|_{\cbe0_t\cal1_x} 
    	&:= \sup_{t\in\I}\|\ta(\cdot,t)\|_{\cal1} 
            + \sup_{\substack{t,s\in\I \\ t\neq s}} \frac{\|\ta(\cdot,t)-\ta(\cdot,s)\|_{\cal1}}{|t-s|^{\be}}\\
        &\leq  \|\ta_0\|e^{\|w\|\tau} \left(1 + 2^{\frac{1-\al}2}e^{\|w\|\tau} + \|w\|\tau e^{(1+\al)\|w\|\tau}\right)\\
            &\qquad + 2\|\ta_0\| \|w\|\tau^{1-\be} e^{\|w\|\tau} \left(1+e^{\al\|w\|\tau} + 2^{\frac{1-\al}2}e^{\|w\|\tau} + \|w\|\tau e^{(1+\al)\|w\|\tau}\right).
    \end{align*}
  \end{itemize}
  This concludes the proof.
  \end{proof}

  In a similar way, the calculations above can be computed
  for $w\in\cbe0(\I;\cal{k+1}_{\s}(\T^2;\R^2))$ with $k\in\N$,
  using 
 Fa\`a di Bruno's formula for higher order chain rule: for $k\in\N$
    \begin{align*}
    	D^k\ta(x,t)\ = \sum_{\substack{m\in\N^k \\ \sum_{n=1}^k nm_n=k}} \frac{k!}{\prod_{n=1}^k m_n! n!^{m_n}}
        D^{\sum_{n=1}^k m_n} \ta_0(F(0,t,x)) \prod_{n=1}^k \left(D^n F(0,t,x)\right)^{m_n},
    \end{align*}
    where for any $n\in\N$
 \begin{align*}
 	D^n F(t,0,x) \ =\sum_{\substack{m\in\N^n \\ \sum_{j=1}^n jm_j=n}} \frac{n!}{\prod_{j=1}^n m_j! j!^{m_j}}
 	\int_0^t D^{\sum_{j=1}^n m_j}w(F(r,0,x),r)\prod_{j=1}^n \left(D^j F(r,0,x)\right)^{m_j}.
 \end{align*}
 As the calculation is tedious, but the same argument as above needs to be applied, we do not include the details here.


\section*{Acknowledgment}
We would like to thank Jacques Vanneste for suggesting the problem under study in this article. We would also like to thank Jacques Vanneste and Beatrice Pelloni for several stimulating conversations related to the work herein. Finally, we are grateful to Miles Wheeler, Martin Dindo\v{s} and Heiko Gimperlein for discussions on elliptic problems on unbounded domains.
  
S. Lisai was supported by The Maxwell Institute Graduate School in Analysis and its
Applications (MIGSAA) funded by the UK Engineering and Physical
Sciences Research Council (grant EP/L016508/01), the Scottish Funding Council, Heriot-Watt
University and the University of Edinburgh.
M. Wilkinson is supported by the EPSRC Standard Grant EP/P011543/1.
  

\addcontentsline{toc}{section}{References}
\bibliographystyle{alpha}
\bibliography{biblio}

\end{document}